\documentclass[12pt]{amsart}

\usepackage[T1]{fontenc}
\usepackage[utf8]{inputenc}
\usepackage{lmodern}
\usepackage{amsmath,amssymb,amsthm,mathtools,amscd}
\usepackage{tikz-cd}
\usepackage{enumitem}
\usepackage{etoolbox}
\usepackage{hyperref}
\usepackage{microtype}
\usepackage[margin=1.1in]{geometry}
\hypersetup{
  hidelinks,
  pdftitle={Spinor-Cone Classes and the Chow Characteristic Image of Spin(10)},
  pdfauthor={Sanghoon Baek},
  pdfsubject={Chow characteristic images of spin and special Clifford groups},
  pdfkeywords={Chow rings, classifying spaces, spin groups, Clifford groups,
    Steenrod operations}
}
\makeatletter
\patchcmd{\abstract}{\normalfont\Small}{\normalfont\normalsize}{}{}
\makeatother

\newtheorem{theorem}{Theorem}[section]
\newtheorem{proposition}[theorem]{Proposition}
\newtheorem{lemma}[theorem]{Lemma}
\newtheorem{corollary}[theorem]{Corollary}

\newcommand{\CH}{\operatorname{CH}}
\newcommand{\Ch}{\operatorname{Ch}}
\newcommand{\Spin}{\operatorname{Spin}}
\newcommand{\OGr}{\operatorname{OGr}}
\newcommand{\Gm}{\mathbb G_m}
\newcommand{\Spec}{\operatorname{Spec}}
\newcommand{\im}{\operatorname{Im}}
\newcommand{\codim}{\operatorname{codim}}
\newcommand{\St}{\operatorname{St}}
\newcommand{\F}{\mathbb F_2}
\newcommand{\Z}{\mathbb Z}

\newcommand{\res}{\operatorname{res}}
\newcommand{\PP}{\mathbb P}
\newcommand{\Even}{\mathcal E}

\title[The Chow Characteristic Image of \(\Spin(10)\)]{The Chow Characteristic Image of \(\Spin(10)\) via the Affine Cone over the Spinor Variety}
\author{Sanghoon Baek}
\address{Department of Mathematical Sciences, KAIST, 291 Daehak-ro,
Yuseong-gu, Daejeon 34141, Republic of Korea}
\email{sanghoonbaek@kaist.ac.kr}
\urladdr{https://mathsci.kaist.ac.kr/~sbaek/}
\date{}
\subjclass[2020]{14C25, 20G15}
\keywords{Chow rings, classifying spaces, spin groups, Clifford groups, Steenrod operations}

\begin{document}

\begin{abstract}
Let \(G=\Spin(10)\) be the split spin group over a field of characteristic
different from \(2\), and let \(T\subset G\) be a split maximal torus. We
determine the image of the integral Chow restriction map
\(\CH(BG)\to \CH(BT)^W\), equivalently \(\CH(BG)\) modulo torsion.
The main new geometric ingredient in the proof is a construction
of the class \(c_2c_3c_5\), where
the \(c_i\) are the elementary Chern classes after restriction to \(T\).  This class is obtained from the proper equivariant push-forward associated
with the affine cone over the spinor variety in its half-spin embedding for
the special Clifford group \(\Gamma^+(10)\).

Modulo two, the image is the subring generated, over the smallest
Steenrod-stable subring of \(\F[c_2,c_3,c_4,c_5]\) containing
\(c_2^2,c_3^2,c_4^2,c_5\) and \(c_2c_3c_5\), by the torus restriction of the
top Chern class of a half-spin representation. The integral characteristic
image is the full inverse image of this mod-two subring under reduction
modulo \(2\).
\end{abstract}

\maketitle

\section{Introduction}

Let \(F\) be a field of characteristic different from \(2\).  Let \(G\) be a
split reductive algebraic group over \(F\), with split maximal torus \(T\) and
Weyl group \(W\).  We use Chow groups of classifying spaces in the sense of
Totaro~\cite{TotaroChow}.  The restriction 
\(\CH(BG)\to\CH(BT)\) has image in \(\CH(BT)^W\); we denote the resulting map by
\[
  \Phi_G:\CH(BG)\longrightarrow\CH(BT)^W .
\]  
We write \(\Ch(-)=\CH(-)/2\CH(-)\). Throughout, \(\F\) denotes the field with two elements, and
\(\varphi_G:\Ch(BG)\to\Ch(BT)^W\) denotes the mod-two restriction homomorphism,
corestricted to the Weyl-invariant subring.  By Totaro's torsion-index theorem, the kernel and cokernel of \(\Phi_G\)
are killed by the torsion index of \(G\)
\cite[Theorem~1.3(1)]{TotaroSpin}. Since \(\CH(BT)\) is torsion-free, the kernel of \(\Phi_G\) is precisely the
torsion ideal of \(\CH(BG)\). Thus determining \(\CH(BG)\) modulo torsion is equivalent to determining the
restriction image.  

For spin and Clifford groups this image is much subtler
than the full Weyl-invariant ring, even after reduction modulo two; see, for
example,
\cite{BaekSpinorCones,KarpenkoEnvelopes,KarpenkoSpinModTorsion,KarpenkoSpecialClifford}.
Complete Chow-ring computations for spin classifying spaces are scarce and
become complicated quickly. Guillot's computation of \(\CH(B\!\Spin(7))_{(2)}\)
already illustrates this complexity; his discussion of \(\Spin(8)\) explains
the additional difficulty of controlling multiplicative relations
\cite[Remark~9.4]{GuillotSpin7}. The \(\Spin(8)\) case over the complex numbers
was later described by Molina Rojas \cite{MolinaRojasSpin8}. This scarcity and
complexity make the characteristic image a natural first target for higher spin
groups, since it identifies with \(\CH(BG)\) modulo torsion.

Karpenko determined the mod-two Chow characteristic image for \(G=\Spin(n)\)
with \(n=7,8,9,11,12,13\): in these cases the image is the subring of squares
in \(\Ch(BT)^W\) \cite[Theorem~1.1]{KarpenkoSpinModTorsion}. The remaining value \(n=10\) is exceptional. The torus restriction of the top Chern class of a
half-spin representation and the Euler class \(c_5\) already give non-square classes;
the latter belongs to the image in this rank by
\cite[Lemma~A.1]{KarpenkoSpinModTorsion}. Steenrod operations applied to these classes produce further non-square classes. After these standard classes are accounted for,
one remaining direction is represented by \(c_2c_3c_5\).  The new geometric
construction in the present paper realizes this class as an actual mod-two
Chow characteristic class.  Once this class has been constructed, the remaining
argument is internal and structural: Steenrod stability leaves only three possible residue directions in the coefficient algebra,
and two low-degree Steenrod operations eliminate all of them.

Our first result is a general geometric construction for special Clifford
groups. Fix an integer \(n\ge4\), let \(\Gamma=\Gamma^+(2n)\), and let
\(T_\Gamma\subset\Gamma\) be the standard split maximal torus, with Weyl group
\(W\) and \(\Ch(BT_\Gamma)=\F[z,x_1,\ldots,x_n]\). Let \(\res\) denote the restriction map
\begin{equation}\label{eq:torus-restriction}
  \res:\Ch(B\Gamma)\longrightarrow\Ch(BT_\Gamma)
\end{equation}
induced by \(T_\Gamma\hookrightarrow\Gamma\).  Let \(S^+\) be a half-spin representation of \(\Gamma\), let \(\ell\subset S^+\) be the highest-weight line, and let \(P\subset\Gamma\)
be its stabilizer.  Set
\[
  \Even_n=\{I\subset\{1,\ldots,n\}: |I|\text{ is even and } |I|\ge4\}.
\]
The affine cone over the spinor variety \(\Gamma/P\subset\PP(S^+)\), in its
half-spin embedding, gives a proper \(\Gamma\)-equivariant push-forward
\(J:\Ch_P^r(\Spec F)\to \Ch^{r+\delta_n}(B\Gamma)\) for every \(r\ge0\), where
\(\delta_n=2^{n-1}-1-\binom n2\). Set
\begin{equation}\label{eq:spinor-cone-En}
  E_n=\prod_{I\in\Even_n}\left(\sum_{i\in I}x_i\right).
\end{equation}

\begin{theorem}\label{thm:intro-spinor-cone}
For every \(n\ge4\), with \(\Gamma\), \(T_\Gamma\), \(J\), and \(\res\) as above
and \(E_n\) as in \eqref{eq:spinor-cone-En}, if
\(u\in\Ch(BT_\Gamma)^W\) and \(\widetilde u\in\Ch_P(\Spec F)\) is any class
whose restriction to \(T_\Gamma\) is \(u\), then
\[
  \res J(\widetilde u)=uE_n.
\]
Consequently,
\[
  E_n\cdot\Ch(BT_\Gamma)^W\subset
  \operatorname{Im}\bigl(\Ch(B\Gamma)\to\Ch(BT_\Gamma)^W\bigr).
\]
\end{theorem}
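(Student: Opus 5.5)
We will realize \(J\) as a composite of standard equivariant maps and compute its torus restriction through a localization/normal-bundle computation. Let \(C\subset S^+\) be the affine cone over \(\Gamma/P\), a closed \(\Gamma\)-stable subvariety of codimension \(d=\dim S^+-\dim C=2^{n-1}-1-\binom n2=\delta_n\) (since \(\dim\Gamma/P=\binom n2\)). Then \(\Ch_\Gamma(C)\) receives the push-forward along the closed embedding \(i:C\hookrightarrow S^+\), followed by the homotopy-invariance isomorphism \(\Ch_\Gamma(S^+)\cong\Ch(B\Gamma)\); this raises degree by \(\delta_n\). To feed in \(\Ch_P(\Spec F)\), we use the resolution \(\widetilde C=\Gamma\times^P\ell\to C\): it is proper and birational, and \(\Ch_\Gamma(\widetilde C)\cong\Ch_P(\ell)\cong\Ch_P(\Spec F)\). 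Thus
\[
J:\Ch_P(\Spec F)\cong\Ch_\Gamma(\widetilde C)\longrightarrow\Ch_\Gamma(S^+)\cong\Ch(B\Gamma),
\]
the push-forward along the proper map \(\pi:\widetilde C\to S^+\). Equivalently, \(\widetilde C\) is the total space of the line subbundle \(\mathcal L\subset \Gamma/P\times S^+\), and \(J\) is \(\widetilde u\mapsto p_*\bigl(\widetilde u\cdot c_{top}(\mathcal Q)\bigr)\), where \(p:\Gamma/P\to\Spec F\), \(\mathcal Q=S^+/\mathcal L\), and \(\Ch_P=\Ch_\Gamma(\Gamma/P)\). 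This is the usual push-forward formula for a subbundle: \(\widetilde C\subset\Gamma/P\times S^+\) is cut out by the zero section of \(\mathcal Q\), and we then project.

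Next we restrict to \(T_\Gamma\). Equivariant push-forward along \(\Gamma/P\to\mathrm{pt}\) commutes with restriction to \(T_\Gamma\). The \(T_\Gamma\)-push-forward is given by the localization (Atiyah–Bott/Brion) formula: the fixed points are \(wP\) for \(w\in W/W_P\), so
\[
\res J(\widetilde u)=\sum_{w\in W/W_P} w\!\left(\frac{u\cdot \prod_{\mu\neq\lambda}\mu}{\prod_{\alpha\in\Phi^-\setminus\Phi_P}\alpha}\right).
\]
Here \(\lambda\) is the highest weight of \(S^+\), the numerator product runs over the remaining weights \(\mu\) of \(S^+\) (the tangent weights of \(\mathcal Q\) at the base point), and the denominator is the product of the tangent weights of \(\Gamma/P\). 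To make this legitimate mod 2, we run the computation integrally in \(\CH(BT_\Gamma)\) and reduce at the end; since \(u\) is \(W\)-invariant, it factors out of the sum. The weights of \(S^+\) are \(\tfrac12(z\pm x_1\pm\cdots\pm x_n)\), written in the paper's coordinates as \(z+\sum_{i\in I}x_i\) with \(|I|\) even, or something similar. The tangent weights of \(\Gamma/P\) are the \(x_i+x_j\), which correspond exactly to the weights \(\mu\) with \(|I|=2\) relative to \(\lambda\). Hence at the base point the quotient
\[
\prod_{\mu\ne\lambda}\mu\Big/\prod\text{(tangent weights)}
\]
cancels the \(|I|=2\) factors up to sign, leaving the factors with \(|I|\ge4\). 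The sum is then \(\sum_w w(\text{that})\) divided by nothing. We therefore need the integral identity
\[
\sum_{w\in W/W_P}\frac{w\bigl(\prod_{\mu\neq\lambda}\mu\bigr)}{w\bigl(\prod_{T}\bigr)}\equiv E_n \pmod 2 .
\]

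This identity is the main obstacle. The cleanest route avoids localization: mod 2, restrict further to the subgroup where \(z\) has been killed or where the computation is convenient. Better, we observe that \(J(\widetilde u)=u\cdot J(1)\) by the projection formula, since \(\widetilde u\) is pulled back from \(B\Gamma\) up to classes that vanish after \(\res\), using that \(u\) is \(W\)-invariant and \(\Ch(BT)^W\) lifts rationally. So it suffices to compute \(\res J(1)=\res[C]\), the equivariant fundamental class of the cone. For this we use that \([C]\in\CH_{T}(S^+)\) is a \(T\)-equivariant multidegree of a \(T\)-stable subvariety of a representation. We degenerate \(C\) by a Gröbner/torus degeneration to a union of coordinate subspaces. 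Mod 2 we instead compute via the known formula \(\pi_*\) of the resolution and check the identity after reducing coefficients, where the signs disappear. Concretely, the remaining terms over \(w\neq 1\) must sum to zero mod 2 after clearing denominators. We expect to prove this by pairing the cosets \(w\) with their images under a fixed-point-free involution, using the \(2\)-divisibility of the differences \(w(E)-E\). If that fails, we would compute \(J(1)\) as \(c_{\delta_n}\) of an excess/normal bundle restricted to a generic \(T\)-fixed line, where \(C\) is smooth along \(\ell\setminus0\). Degree considerations then force \(\res[C]\) to be a \(W\)-invariant of degree \(\delta_n\) divisible by each factor of \(E_n\): each \(\sum_{i\in I}x_i\) vanishes on a locus where \(C\) contains an extra fixed line. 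Comparing leading coefficients then gives \(E_n\). The "Consequently" clause follows because every \(u\in\Ch(BT_\Gamma)^W\) lifts to \(\Ch_P\), as \(\Ch_P\to\Ch(BT)^{W_P}\) is surjective mod 2 for the Levi \(\Gamma L(n)\)-type group, which is special.
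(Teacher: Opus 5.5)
You have the right framework: writing $J(\widetilde u)=p_*\bigl(\widetilde u\cdot c_{\mathrm{top}}(\mathcal Q)\bigr)$, restricting to $T_\Gamma$, and using $W$-invariance of $u$ to reduce to $\res J(1)$ is the content of Lemma~\ref{lem:cone-demazure} together with \eqref{eq:demazure-invariant-linearity}. You use localization where the paper uses $\partial_{w^P}$.

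\textbf{The main gap.} The step you yourself call the main obstacle, $\res J(1)=E_n$, is never proved. You list several routes without completing any, and the most explicit one rests on a false cancellation. At the base point the numerator factors are the weights $\lambda_I=z-\sum_{i\in I}x_i$ of $S^+/\ell$. The tangent weights of $\Gamma/P$ are $\lambda_I-\lambda_\varnothing=-\sum_{i\in I}x_i$ for $|I|=2$. These agree only after setting $z=0$, so the $w=1$ term is a genuine rational function, not $\prod_{|I|\ge4}\lambda_I$. Your plan to show that the terms with $w\ne1$ cancel modulo $2$ therefore has no basis.

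What is missing is the paper's two-step argument.
\begin{enumerate}
\item Restrict $\Theta=\res J(1)$ to the orbit $\ell^\times$, along which $C$ is smooth. The self-intersection formula gives $c_{\delta_n}$ of the normal bundle, whose weights are the $\lambda_I$ with $|I|\ge4$. Since $\Ch_{T_\Gamma}(\ell^\times)=\Ch(BT_\Gamma)/(z)$, this yields only $\Theta\equiv E_n\pmod z$.
\item Remove the congruence. One has $\Theta\in\Ch(BT_\Gamma)^W=\F[t,c_1,\ldots,c_n]$ with $\deg t=2^{n-1}>\delta_n$. Hence $\Theta\in\F[c_1,\ldots,c_n]$, and this subring meets the ideal $(z)$ only in $0$.
\end{enumerate}
Your last fallback gestures at the first step, but you never note that it determines $\Theta$ only modulo $z$. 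You replace the second step with the claim that $\res[C]$ is divisible by every $\sum_{i\in I}x_i$ because ``$C$ contains an extra fixed line.'' Nothing in the sketch proves this, and even granting it, the normalizing constant would still have to come from the first step.

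\textbf{Further problems.}
\begin{enumerate}
\item Your ``better'' route is wrong. $\widetilde u$ is not in general pulled back from $B\Gamma$; if it were, $u$ would already lie in the image and the theorem would be vacuous. For $n=5$ and $u=c_2c_3$ this fails by Lemma~\ref{lem:rank-five-ambient}. The reduction to $u=1$ must come from linearity of the torus-level push-forward over $W$-invariants, not from the projection formula on $B\Gamma$.
\item In your integral localization, an integral lift of $\widetilde u$ restricts to a class that is $W$-invariant only modulo $2$. So ``$u$ factors out of the sum'' needs justification. One option is to work with $\partial_{w^P}$ over $\F$, as the paper does, where $\partial_\alpha(uf)=u\,\partial_\alpha(f)$ is immediate once $s_\alpha u=u$. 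The other is to observe that the integral discrepancy is $2$ times a rational function whose denominator is a product of primitive roots $\pm x_i\pm x_j$, and apply Gauss's lemma.
\end{enumerate}
The lifting argument for the ``Consequently'' clause is fine.
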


We apply this theorem to the exceptional rank-five case.  Let
\(\Gamma=\Gamma^+(10)\), and write \(c_i=e_i(x_1,\ldots,x_5)\) for the elementary
symmetric torus classes on \(T_\Gamma\).  In rank five, \eqref{eq:spinor-cone-En} gives
\[
  E_5=c_5+c_1c_4+c_1^2c_3+c_1^3c_2.
\]
The class \(c_2c_3E_5\) restricts along the inclusion
\(\Spin(10)\hookrightarrow\Gamma\) to \(c_2c_3c_5\), because the induced map on
maximal tori sends \(c_1\) to zero and fixes \(c_2,c_3,c_4,c_5\). This gives the following immediate consequence for \(\Spin(10)\).

\begin{corollary}\label{cor:rank-five-class}
Let \(T\subset\Spin(10)\) be the standard split maximal torus.  Then
\[
  c_2c_3c_5\in
  \operatorname{Im}\bigl(\Ch(B\!\Spin(10))\longrightarrow\Ch(BT)^W\bigr).
\]
\end{corollary}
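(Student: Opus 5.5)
The plan is to deduce the corollary from Theorem~\ref{thm:intro-spinor-cone} with \(n=5\) and \(u=c_2c_3\), using functoriality of restriction along the inclusion \(\Spin(10)\hookrightarrow\Gamma=\Gamma^+(10)\).

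\textbf{Step 1: choose \(\widetilde u\).} The class \(u=c_2c_3\) is \(W\)-invariant in \(\Ch(BT_\Gamma)=\F[z,x_1,\ldots,x_5]\). To apply the theorem I need some \(\widetilde u\in\Ch_P(\Spec F)\) restricting to \(u\). The cheapest choice is to lift \(u\) to \(\Ch(B\Gamma)\) and pull it back to \(P\). Since \(c_2\) and \(c_3\) are elementary symmetric in the \(x_i\), I expect them to be restrictions of Chern classes of the vector representation \(\Gamma\to O(10)\). The restriction of \(c(V)\) is \(\prod(1+x_i)(1-x_i)\), so its mod-two Chern classes are the \(c_i^2\), not \(c_i\). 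Alternatively, \(u\) is \(W\)-invariant, hence \(\Gamma\)-Levi-invariant. Then \(\widetilde u\) exists automatically, because \(\Ch_P(\Spec F)=\Ch(BL)\) for the Levi \(L\) of \(P\), and \(\Ch(BL)\to\Ch(BT)^{W_L}\) is surjective for the \(\mathrm{GL}\)-type Levi. Here \(L\) is the Levi of the stabilizer of a maximal isotropic subspace, so \(L\cong \mathrm{GL}_5\times\Gm\) up to isogeny. I expect the relevant classes to be its Chern classes, namely \(c_i\) of the tautological rank-5 bundle, whose restrictions are \(e_i(x)\) possibly shifted by \(z\). If a shift appears, I would adjust by the \(z\)-terms, which lie in the image of \(\Ch(B\Gamma)\), or note that they vanish on \(\Spin(10)\). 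So \(\widetilde u\) exists with restriction exactly \(c_2c_3\).

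\textbf{Step 2: apply the theorem.} Theorem~\ref{thm:intro-spinor-cone} then gives a class \(J(\widetilde u)\in\Ch(B\Gamma)\) with \(\res J(\widetilde u)=c_2c_3E_5\). The text following the theorem computes
\[
E_5=c_5+c_1c_4+c_1^2c_3+c_1^3c_2 .
\]

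\textbf{Step 3: restrict to \(\Spin(10)\).} Consider the commutative square formed by \(T\hookrightarrow T_\Gamma\) and \(\Spin(10)\hookrightarrow\Gamma\). The image of \(J(\widetilde u)\) in \(\Ch(B\Spin(10))\) restricts to the image of \(c_2c_3E_5\) under \(\Ch(BT_\Gamma)\to\Ch(BT)\). The map \(T\to T_\Gamma\) sends \(c_1\mapsto0\) mod 2 and fixes \(c_2,\dots,c_5\), as stated before the corollary. Therefore \(c_2c_3E_5\mapsto c_2c_3c_5\).

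The main obstacle is this last claim: identifying the cocharacter map \(T\to T_\Gamma\) on characters mod 2. I would check it using the standard bases. \(T_\Gamma\) has characters \(z,x_i\), and \(T\) has \(x_i\) together with \((\sum x_i)/2\). In \(\Spin(10)\), the characters \(x_i\) of \(T_\Gamma\) restrict to \(2\)-divisible shifts, for instance \(x_i\mapsto x_i-\) (spin weight), or to elements making \(\sum x_i\) twice a weight. This forces \(c_1=\sum x_i\equiv0\) mod 2, while the \(x_i\) restrict to the \(x_i\) modulo \(2\) up to a common even shift, so the \(c_i\) are preserved. I would verify this explicitly against the paper's conventions for \(T_\Gamma\).
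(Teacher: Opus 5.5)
Your proposal is correct and is essentially the paper's proof. The paper likewise takes \(\widetilde u=c_2(\mathcal U)c_3(\mathcal U)\) for the tautological rank-five \(P\)-representation \(\mathcal U\), applies Theorem~\ref{thm:intro-spinor-cone} to get \(c_2c_3E_5\), and restricts to \(T\) using that \(T=\ker(f_0)\) with \(f_0=2z-x_1-\cdots-x_5\), so \(x_i\mapsto x_i\), \(z\mapsto z\), and \(c_1=2z\equiv0\) in \(\Ch(BT)\). Your hedges on two points are slightly off. There is no \(z\)-shift, since \(\mathcal U\) has weights exactly \(x_1,\ldots,x_5\), and existence of \(\widetilde u\) already follows from Lemma~\ref{lem:parabolic-lifting}. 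By contrast, \(z\)-terms are neither in the image from \(\Ch(B\Gamma)\) nor zero on \(\Spin(10)\). Finally, the \(x_i\) are not shifted by the spin weight.
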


We use the mod-two Steenrod operations on Chow groups; see, for example,
\cite{Brosnan}.  We write \(\St=\sum_{i\ge0}\St^i\) for the total
operation, so that \(\St^i\) denotes its \(i\)-th component.  On classifying spaces these operations are applied through Totaro approximations
and commute with restriction to tori; see
\cite{TotaroChow} and \cite[Section~4]{KarpenkoSpinModTorsion}. A graded subring
\(R\) of a mod-two Chow ring is called \emph{Steenrod-stable} if
\(\St^i(R)\subset R\) for every \(i\ge0\).  The smallest Steenrod-stable
graded subring containing a collection of homogeneous elements is obtained by
adjoining all their iterated Steenrod operations and all products.

The main application of the construction is the following complete image
theorem.  Let \(t\in\Ch(BT)\) denote the torus restriction of the top Chern
class of a half-spin representation.  Let \(M\subset\F[c_2,c_3,c_4,c_5]\) be
the smallest graded Steenrod-stable subring containing
\[
  c_2^2,\qquad c_3^2,\qquad c_4^2,\qquad c_5,\qquad c_2c_3c_5.
\]

\begin{theorem}\label{thm:intro-full-image}
Let \(G=\Spin(10)\), and let \(T\subset G\) be the standard split maximal torus. Then
\[
  \operatorname{Im}\varphi_G=M[t].
\]
If \(\rho:\CH(BT)^W\to\Ch(BT)^W\) is reduction modulo two, then
\[
  \operatorname{Im}\Phi_G=\rho^{-1}(M[t]).
\]
Equivalently, via restriction to \(T\), \(\CH(B\!\Spin(10))/\mathrm{tors}\)
is identified with the subring \(\rho^{-1}(M[t])\subset\CH(BT)^W\).
\end{theorem}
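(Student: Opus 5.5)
The proof has two halves: the inclusion \(M[t]\subset\operatorname{Im}\varphi_G\), which is constructive, and the reverse inclusion, which is the real content. The integral statement then follows formally.

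\emph{Lower bound.} The classes \(c_2^2,c_3^2,c_4^2\) lie in the image. Indeed, \(\Spin(10)\to\mathrm{SO}(10)\) pulls back the Chern classes of the vector representation, and their torus restrictions are the elementary symmetric functions in the \(\pm x_i\). Mod two, \(c_{2i}(V)\) restricts to \(c_i^2\). The class \(c_5\) is in the image by \cite[Lemma~A.1]{KarpenkoSpinModTorsion}, and \(c_2c_3c_5\) by Corollary~\ref{cor:rank-five-class}. The class \(t\) is in the image by definition, as the restriction of \(c_{16}(S^+)\). Since \(\operatorname{Im}\varphi_G\) is a subring and is stable under \(\St\) (Steenrod operations commute with restriction to \(T\)), it contains \(M\), and hence \(M[t]\).

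\emph{Upper bound.} I would first describe \(\Ch(BT)^W\) for the rank-five spin torus. Following Karpenko's setup, \(\Ch(BT)^W\) should be a free module over \(\F[c_2,c_3,c_4,c_5]\) (or over its image), with \(t\) generating the extra part. I would then reduce modulo \(t\) by an argument on the \(t\)-adic filtration: if \(y=\sum y_jt^j\) lies in the image with \(y_j\in\F[c_2,\dots,c_5]\), then each \(y_j\) lies in \(M\). To carry this out I would use the structure of \(\CH(B\Spin(10))\) through Totaro's torsion index. The torsion index of \(\Spin(10)\) is \(2^3\), so \(8\) times any invariant is in the image, but this alone is insufficient mod two. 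So I would follow Karpenko's method instead. He bounds the image by the subring \(\Ch(BT)^W\cap \operatorname{Im}(\Ch(B\Gamma)\to\Ch(BT_\Gamma))\), or by the images of generic-fibre/flag-variety arguments. These give an a priori upper bound \(U\), namely the subalgebra over \(\F[c_2^2,c_3^2,c_4^2,c_5]\) generated by \(t\) and a few explicit non-square candidates.

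The main step is to compare \(U\) with \(M[t]\). I would compute the quotient \(U/M[t]\) degree by degree as an \(\F[c_2^2,c_3^2,c_4^2,c_5]\)-module. After \(c_2c_3c_5\) and the Steenrod images of \(c_5\), \(t\) and \(c_2c_3c_5\) are accounted for (for instance \(\St^1(c_5)\), \(\St^2(c_5)\) and \(\St^1(c_2c_3c_5)\), computed via the Cartan formula and the Wu formulas \(\St(c_i)\) for elementary symmetric classes), only finitely many residue directions remain. Following the introduction's description, there should be three such directions, say \(r_1,r_2,r_3\). For each, I would argue that if some \(y\equiv r_k \pmod{M[t]}\) were in the image, then applying \(\St^1\) or \(\St^2\) and reducing would force an element of \(\operatorname{Im}\varphi_G\) lying outside \(U\). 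This contradicts the a priori bound, so the image in \(U/M[t]\) is zero. I expect this elimination, together with establishing the a priori bound \(U\) with exactly these residue directions, to be the main obstacle. It requires explicit and error-prone Steenrod computations in \(\F[c_2,\dots,c_5]\), and the choice of which two operations to use.

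\emph{Integral statement.} The inclusion \(\operatorname{Im}\Phi_G\subset\rho^{-1}(M[t])\) is immediate from the mod-two result. For the reverse, take \(y\in\CH(BT)^W\) with \(\rho(y)\in M[t]\). Lift \(\rho(y)\) to some \(x\in\CH(BG)\), so that \(y-\Phi_G(x)=2y'\) with \(y'\in\CH(BT)^W\), using that \(\CH(BT)\) is torsion-free and \(W\)-invariance divides by \(2\). Then iterate on \(y'\). Since \(\CH(BT)^W\) is graded with finite-rank pieces and the cokernel of \(\Phi_G\) is killed by \(2^3\), the iteration stops after three steps: \(8y'''\) is already in the image. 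This gives \(y\in\operatorname{Im}\Phi_G\), which yields the identification of \(\CH(B\Spin(10))/\mathrm{tors}\).
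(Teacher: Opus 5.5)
There are genuine gaps. Your lower bound is exactly the paper's, but the upper bound is only a plan, and the parts you leave open are where the proof lives; one of your guesses is also wrong.

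\textbf{Upper bound.} The paper does three things your proposal does not.
\begin{enumerate}
\item It gets a concrete ambient bound \(\im\varphi_G\subset U[t]\), with \(U=\F[c_2,c_4,c_5,c_3^2,c_3c_5]\). This comes from \cite[Proposition~2.1]{KarpenkoSpinModTorsion} after reducing the classes \(f_1,f_2,f_3\) modulo two to \(c_2\), \(c_4\), and \(c_4^2+c_3c_5+c_2c_3^2\).
\item It shows that every \(c_5\)-divisible monomial of \(U\) lies in \(M\), and that \(M\) maps onto \(R=\F[c_2^2,c_3^2,c_4^2]\) modulo \(c_5\). Hence \(U/M\) is free over \(R\) on \(c_2,c_4,c_2c_4\).
\item It handles \(t\) using a fact your ``\(t\)-adic filtration'' lacks. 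The weights of \(S^+\) are \(z+h\), with \(h\) running over a four-dimensional \(\F\)-subspace. So \(\prod_h(y+h)\) is an additive polynomial, and \(c_i(S^+)|_T=0\) for \(1\le i<8\). Hence \(\St^i(t^r)=0\) for \(1\le i<8\), the mixed Cartan terms vanish, and \(\St^1,\St^3\) act coefficientwise on \(U[t]\). Without this, nothing lets you pass from \(y\) to its \(t\)-coefficients, since cross terms \(\St^j(t^r)\St^{i-j}(y_r)\) could interfere.
\end{enumerate}
Your proposed operations also fail when each is applied once. For \(B\in R\) one has \(\St^1(Bc_4)=Bc_5\in U\) and \(\St^2(Bc_4)=Bc_2c_4+\St^2(B)c_4\in U\), so the \(c_4\)-direction is invisible to them. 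The paper uses \(\St^1\) to kill \(Ac_2+Cc_2c_4\), which leaves \(Ac_3+Cc_3c_4\notin U\). It then uses \(\St^3(Bc_4)=B(c_3c_4+c_2c_5)+\St^2(B)c_5\), whose term \(Bc_3c_4\) lies outside \(U\) unless \(B=0\). Applying \(\St^1\) after \(\St^2\) gives the same class.

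\textbf{Integral step.} This step fails as written. The torsion index of \(\Spin(10)\) is \(2\), not \(2^3\) \cite[Theorem~0.1]{TotaroSpin}. Your iteration also does not run: after writing \(y=\Phi_G(x)+2y'\), nothing is known about \(\rho(y')\), so there is no second lift. In fact the identity \(\im\Phi_G=\rho^{-1}(M[t])\) forces \(2\CH(BT)^W=\ker\rho\subset\im\Phi_G\). So it could not follow from knowing only that the cokernel is killed by \(8\). With the correct torsion index, your first step already finishes the argument, since \(2y'\in2\CH(BT)^W\subset\im\Phi_G\). This is exactly the paper's argument.
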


The proof of the containment \(\operatorname{Im}\varphi_G\subset M[t]\) is
structural, rather than a degree-by-degree computation. It begins with the rank-five ambient bound of
Lemma~\ref{lem:rank-five-ambient}, which places \(\operatorname{Im}\varphi_G\)
inside \(U[t]\), where
\begin{equation}\label{eq:UR-subrings}
  U=\F[c_2,c_4,c_5,c_3^2,c_3c_5],
  \qquad
  R=\F[c_2^2,c_3^2,c_4^2].
\end{equation}
Using Lemma~\ref{lem:M-mod-c5}, Lemma~\ref{lem:U-normal-form} shows that every
class in \(U/M\) has a unique representative
\(A c_2+B c_4+C c_2c_4\), with \(A,B,C\in R\). For a class \(x\in U\) with residue
\(A c_2+B c_4+C c_2c_4\), the condition \(\St^1(x)\in U\) forces
\(A=C=0\), and the additional condition \(\St^3(x)\in U\) forces \(B=0\). Since \(\operatorname{Im}\varphi_G\) is Steenrod-stable, this eliminates all
possible residues outside \(M\). Finally, Lemma~\ref{lem:halfspin-low-chern} handles the half-spin variable
\(t\) coefficientwise: the restricted Chern classes
\(s_i=c_i(S^+)|_T\) vanish for \(1\le i<8\), so
\(\St^i(t^r)=0\) for \(1\le i<8\) and all \(r\ge0\).

Section~2 constructs the spinor-cone push-forward and computes its torus
restriction by a normal-bundle top Chern class calculation. Section~3 uses the Demazure push-forward formula to obtain Weyl-invariant
multiples of the class \(E_n\), completing the proof of
Theorem~\ref{thm:intro-spinor-cone}. We then prove
Corollary~\ref{cor:rank-five-class} by specializing this construction to
\(\Gamma^+(10)\).  Section~4 proves the mod-two image theorem and the integral statement modulo torsion, beginning with the rank-five ambient bound and the Steenrod formulas
used in the calculation.

\section{The affine cone over the spinor variety and its push-forward}
\label{sec:cone}

Throughout this section, fix \(n\ge4\). We omit \(n\) from the notation whenever
no confusion is possible; the rank-dependent symbols \(E_n\) and \(\delta_n\)
retain their subscripts.

Let \(\Gamma=\Gamma^+(2n)\) be the split special Clifford group. We use the
standard central-product presentation
\[
  \Gamma\simeq (\Gm\times \Spin(2n))/\mu_2,
\]
with semisimple part \(\Spin(2n)\) and quotient torus \(\Gm\); see
\cite[\S23.A]{KMRT} and
\cite[Section~1, (1.1)]{KarpenkoSpecialClifford}. Let \(T_\Gamma\subset \Gamma\) be the
standard split maximal torus, and choose standard character coordinates
\(z,x_1,\ldots,x_n\) on \(T_\Gamma\).  Thus
\[
  \CH(BT_\Gamma)=\Z[z,x_1,\ldots,x_n],
  \qquad
  \Ch(BT_\Gamma)=\F[z,x_1,\ldots,x_n].
\]
Let \(c_i=e_i(x_1,\ldots,x_n)\). Unless explicitly stated otherwise, the
symbols \(c_i\) denote these elementary symmetric torus classes; in particular,
the Euler class is \(c_n=x_1\cdots x_n\).

Let \(W\) be the Weyl group of \(\Gamma\). It is of type \(D_n\), generated by
permutations of the \(x_i\) and even sign changes; in the special Clifford
torus, the sign change in the coordinates \(x_i,x_j\) sends
\(z\) to \(z-x_i-x_j\).

We use the special-Clifford torus coordinates and Weyl action of
\cite[Section~1]{KarpenkoSpecialClifford}. For completeness, we include a
short self-contained proof of the mod-two invariant-ring formula. Let \(S^+\) be the half-spin representation of \(\Gamma\) whose highest weight
is \(z\).

\begin{lemma}\label{lem:special-clifford-invariants}
In the above coordinates,
\[
  \Ch(BT_\Gamma)^W=\F[t,c_1,c_2,\ldots,c_n],
\]
where
\[
  t=\prod_{\substack{I\subset\{1,\ldots,n\},\, |I|\ \mathrm{even}}}
  \left(z+\sum_{i\in I}x_i\right).
\]
The class \(t\) is the mod-two torus restriction of
\(c_{2^{n-1}}(S^+)\). In particular, \(\deg t=2^{n-1}\).
\end{lemma}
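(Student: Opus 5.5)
We have to prove that \(\F[z,x_1,\ldots,x_n]^W=\F[t,c_1,\ldots,c_n]\), that \(t\) is the torus restriction of \(c_{2^{n-1}}(S^+)\), and that \(\deg t=2^{n-1}\). The plan has three steps.

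\textbf{Step 1 (weights of \(S^+\) and invariance of \(t\)).} The weights of \(S^+\) are the \(W\)-orbit of the highest weight \(z\). In the special-Clifford coordinates this orbit is \(\{z+\sum_{i\in I}x_i : |I| \text{ even}\}\), up to sign. Signs are irrelevant mod \(2\), and we only need the weights mod \(2\). Hence the splitting principle gives
\[
\res c_{2^{n-1}}(S^+)=\prod_{|I|\ \mathrm{even}}\Bigl(z+\sum_{i\in I}x_i\Bigr)=t,
\]
and \(\deg t=2^{n-1}\). Since \(t\) is a restriction from \(B\Gamma\), it is \(W\)-invariant. One can also check this directly. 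Mod \(2\), the sign change at \(\{i,j\}\) fixes every \(x_k\) and sends \(z\mapsto z+x_i+x_j\). This permutes the even subsets by \(I\mapsto I\triangle\{i,j\}\), so it permutes the factors of \(t\). The \(c_k\) are invariant because mod \(2\) they are fixed by sign changes and symmetric under permutations. This gives the inclusion \(\F[t,c_1,\ldots,c_n]\subset\Ch(BT_\Gamma)^W\).

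\textbf{Step 2 (reduction to the sign-change subgroup).} Let \(N\cong(\Z/2)^{n-1}\) be the subgroup of even sign changes. Mod \(2\) it acts trivially on the \(x_i\) and by translations \(z\mapsto z+\sum_{i\in I}x_i\) with \(|I|\) even. Set \(K=\F(x_1,\ldots,x_n)\) and \(V=\{\sum_{i\in I}x_i : |I|\text{ even}\}\). If \(x_1,\ldots,x_n\) are algebraically independent, \(V\) is an \(\F\)-subspace of \(K\) of dimension \(n-1\). Then \(t=\prod_{v\in V}(z+v)\) is an additive (Dickson/Moore) polynomial in \(z\) of degree \(2^{n-1}\), monic over \(\F[x]\).

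Now use the standard argument for the invariants of a translation group in one variable over a polynomial ring. Take \(f\in\F[x][z]^N\) and divide by the monic polynomial \(t\) in \(z\): \(f=qt+r\) with \(\deg_z r<2^{n-1}\). Because \(t\) and \(f\) are invariant, uniqueness of division forces \(q\) and \(r\) to be invariant. An invariant \(r\) satisfies \(r(z+v)=r(z)\) for all \(v\in V\). So \(r-r(0)\) has the \(2^{n-1}\) distinct roots \(v\in V\) while having degree \(<2^{n-1}\), hence \(r\in\F[x]\). Induction on \(\deg_z f\) then gives \(\F[x][z]^N=\F[x][t]\).

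\textbf{Step 3 (the remaining permutation invariance).} \(W/N\cong S_n\) permutes the \(x_i\) and fixes \(z\) mod \(2\). It therefore fixes \(t\), since \(t\) is symmetric in the \(x_i\). Because \(t\) is monic in \(z\) over \(\F[x]\), the ring \(\F[x][t]\) is a polynomial ring in \(t\) over \(\F[x]\). Taking \(S_n\)-invariants coefficientwise in \(t\) gives \(\F[x]^{S_n}[t]=\F[c_1,\ldots,c_n,t]\), which completes the proof.

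The main obstacle is pinning down the coordinate conventions of \cite{KarpenkoSpecialClifford}. One must verify that the weights of \(S^+\) are exactly \(z+\sum_{i\in I}x_i\), \(|I|\) even, mod \(2\), rather than, say, \(z-\tfrac12\sum\pm x_i\) in some other normalization. The rest is the standard Dickson-type argument.
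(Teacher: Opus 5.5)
Your proof is correct and follows the paper's overall plan. Modulo two, the even sign changes act on $A[z]$, $A=\F[x_1,\ldots,x_n]$, by translations $z\mapsto z+h$ with $h\in H=\{\sum_{i\in I}x_i : |I|\ \text{even}\}$ (your $V$). One shows $A[z]^H=A[t]$, takes $\mathfrak S_n$-invariants coefficientwise in $t$, and identifies $t$ from the weights $z-\sum_{i\in I}x_i$ of $S^+$.

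The real difference is in how $A[z]^H=A[t]$ is proved:
\begin{itemize}
\item The paper inducts along an $\F$-basis $h_1,\ldots,h_d$ of $H$. At each stage $h_j$ acts on $A[t_{j-1}]$ as the involution $t_{j-1}\mapsto t_{j-1}+a_j$, with $a_j=\prod_{h\in H_{j-1}}(h_j+h)$ a nonzerodivisor. The invariant ring of this involution is $A[t_{j-1}(t_{j-1}+a_j)]=A[t_j]$.
\item You divide once by the monic polynomial $t$. Uniqueness of division transfers invariance to the quotient and remainder, since translations preserve $z$-degree. You then kill the remainder by counting its $2^{n-1}$ distinct roots in the domain $A$.
\end{itemize}
Your argument handles the whole group at once. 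It needs only that $t$ is invariant and monic of degree $|H|$ in $z$; the Dickson/additive structure you mention is never actually used. The paper's tower has the side benefit of exhibiting the recursion $t_j=t_{j-1}(t_{j-1}+a_j)$. The same mechanism drives the additivity argument in Lemma~\ref{lem:halfspin-low-chern}.

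The convention worry you flag resolves within the paper's own setup. Under the stated action $z\mapsto z-x_i-x_j$, the $W$-orbit of $z$ is $\{z-\sum_{i\in I}x_i : |I|\ \text{even}\}$. This orbit has $2^{n-1}=\dim S^+$ distinct elements. Every $W$-translate of a weight is again a weight, so these are exactly the weights of $S^+$, each of multiplicity one.
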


\begin{proof}
Let \(A=\F[x_1,\ldots,x_n]\), and let
\(H=\{\sum_{i\in I}x_i:\ |I|\equiv0\pmod 2\}\). Modulo two, the subgroup of
even sign changes fixes the \(x_i\) and acts on \(z\) by the translations
\(z\mapsto z+h\), \(h\in H\). We first compute \(A[z]^H\).

Choose an \(\F\)-basis \(h_1,\ldots,h_d\) of \(H\), put
\(H_j=\langle h_1,\ldots,h_j\rangle\), and set
\(t_j=\prod_{h\in H_j}(z+h)\). Thus \(t_0=z\). We prove by induction that
\(A[z]^{H_j}=A[t_j]\). The case \(j=0\) is clear.

Assume \(A[z]^{H_{j-1}}=A[t_{j-1}]\). The translate of \(t_{j-1}\) by \(h_j\)
is \(H_{j-1}\)-invariant, hence lies in \(A[t_{j-1}]\). It has the same
\(z\)-degree and leading coefficient as \(t_{j-1}\), so it is
\(t_{j-1}+a_j\), where evaluating at \(z=0\) gives
\(a_j=\prod_{h\in H_{j-1}}(h_j+h)\). Since \(h_j\notin H_{j-1}\), all factors
\(h_j+h\) are nonzero linear forms; hence \(a_j\) is a nonzerodivisor in \(A\).

The induced involution on \(A[t_{j-1}]\) sends \(t_{j-1}\) to
\(t_{j-1}+a_j\). Since \(a_j\) is a nonzerodivisor, the invariant subring of this involution is
\(A[t_{j-1}(t_{j-1}+a_j)]\): division by the invariant element
\(t_{j-1}(t_{j-1}+a_j)\) and induction on the \(t_{j-1}\)-degree reduce this to
checking a remainder \(r+s t_{j-1}\), and invariance forces \(sa_j=0\), hence
\(s=0\). Thus
\[
  A[z]^{H_j}=A[t_{j-1}(t_{j-1}+a_j)]=A[t_j].
\]
Induction gives \(A[z]^H=A[t]\), where \(t=\prod_{h\in H}(z+h)\).

Finally, \(\mathfrak S_n\) permutes the \(x_i\), preserves \(H\), and fixes the
orbit product \(t\). Since \(A^{\mathfrak S_n}=\F[c_1,\ldots,c_n]\), we get
\[
  \Ch(BT_\Gamma)^W=(A[z]^H)^{\mathfrak S_n}
  =A^{\mathfrak S_n}[t]=\F[t,c_1,\ldots,c_n].
\]
The \(T_\Gamma\)-weights of \(S^+\) are
\(z-\sum_{i\in I}x_i\), with \(|I|\) even. Their first Chern classes reduce
modulo two to \(z+\sum_{i\in I}x_i\). Therefore \(t\) is the mod-two torus
restriction of \(c_{2^{n-1}}(S^+)\). Since there are \(2^{n-1}\) even subsets
of \(\{1,\ldots,n\}\), one has \(\deg t=2^{n-1}\).
\end{proof}

With the above choice
of torus coordinates, its \(T_\Gamma\)-weights are
\[
  \lambda_I=z-\sum_{i\in I}x_i,
  \qquad
  I\subset\{1,\ldots,n\},\quad |I|\equiv0\pmod 2.
\]
Let \(\ell\subset S^+\) be the weight line of weight \(z\), corresponding to
\(I=\varnothing\), and let \(P\subset\Gamma\) be its stabilizer. Then
\(X=\Gamma/P\) is the closed highest-weight orbit in \(\PP(S^+)\). Since the
connected center of \(\Gamma\) acts by scalars on \(S^+\), the \(\Gamma\)-orbit
of \([\ell]\) coincides with the corresponding \(\Spin(2n)\)-orbit.
Let \(V\) be the split quadratic space of dimension \(2n\) over \(F\), so that
\(\Gamma=\Gamma^+(V)\). Then \(X\) identifies with one connected component
\(\OGr^+(n,V)\) of the maximal orthogonal Grassmannian
\cite[Section~1]{KarpenkoSpecialClifford}.
The inclusion \(X\subset\PP(S^+)\) is the half-spin embedding.

Let \(C\subset S^+\) be the affine cone over \(X\subset\PP(S^+)\). The total space of the tautological line bundle on \(X\) is
\(\widetilde C=\Gamma\times_P\ell\). Identifying it with the incidence variety
\(\{(L,v)\in X\times S^+:v\in L\}\) shows that the natural map
\(\widetilde C\to C\subset S^+\) is projective, hence proper; it is
\(\Gamma\)-equivariant and an isomorphism over \(C\setminus\{0\}\). Since
\(\dim S^+=2^{n-1}\) and \(\dim X=\binom n2\), the affine cone has dimension
\(\binom n2+1\), and hence
\[
  \delta_n:=\codim_{S^+}C=2^{n-1}-1-\binom n2.
\]

Let \(j:\widetilde C\to S^+\) be the composite. Since \(j\) is proper and
\(\dim S^+-\dim\widetilde C=\delta_n\), its integral equivariant push-forward
is defined by \cite[Section~2.3, Proposition~3]{EdidinGraham}. Reducing modulo
two, we obtain $J:\Ch_\Gamma^r(\widetilde C)
  \longrightarrow
  \Ch_\Gamma^{r+\delta_n}(S^+)$. Equivariant homotopy invariance and the standard change-of-groups
isomorphism, applied through the mixed-quotient construction of
\cite[Section~2.2]{EdidinGraham}, give
\[
  \Ch_\Gamma(\widetilde C)
  \simeq \Ch_\Gamma(\Gamma/P)
  \simeq \Ch_P(\Spec F)=\Ch(BP),
  \qquad
  \Ch_\Gamma(S^+)\simeq \Ch(B\Gamma).
\]
Hence \(J\) may be viewed as
\begin{equation}\label{eq:spinor-cone-pushforward}
  J:\Ch_P^r(\Spec F)\longrightarrow \Ch^{r+\delta_n}(B\Gamma).
\end{equation}
Under these identifications, \(1=[\Spec F]_P\in\Ch_P^0(\Spec F)\) corresponds to
the \(\Gamma\)-equivariant fundamental class \([\widetilde C]_\Gamma\). Since \(j\) is birational onto the integral cone \(C\), one has
\(J(1)=j_*[\widetilde C]_\Gamma=[C]_\Gamma\). All computations in this section are carried out modulo two. The passage to the
integral image for \(\Spin(10)\) will be made in Section~4.

We now compute the torus restriction of \(J(1)\), with \(\res\) as in
\eqref{eq:torus-restriction}.

\begin{proposition}\label{prop:normalization}
In \(\Ch(BT_\Gamma)\), one has \(\res J(1)=E_n\), with \(E_n\) as in \eqref{eq:spinor-cone-En}.
\end{proposition}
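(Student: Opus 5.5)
Our plan is to compute \(\res J(1)=\res[C]_\Gamma\) by reducing to the \(T_\Gamma\)-equivariant class of the cone \(C\) in \(S^+\), and then evaluating it at the \(T_\Gamma\)-fixed point \(0\) by localization, through the resolution \(\widetilde C\to C\). Restriction to the torus commutes with proper push-forward, so \(\res J(1)=j_*[\widetilde C]_{T_\Gamma}\in\CH_{T_\Gamma}(S^+)=\CH(BT_\Gamma)\). We work integrally, or at least rationally after inverting nonzero torus weights, and reduce mod two at the end. Since \(\CH(BT_\Gamma)\) is a torsion-free polynomial ring, it suffices to identify the integral class up to sign; signs disappear modulo two.

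Next we apply the self-intersection/localization formula. Let \(i_0:\{0\}\to S^+\) be the origin. Then \(i_0^*i_{0*}\) is multiplication by the top Chern class \(e(S^+)=\prod_I\lambda_I\), and \(i_0^*\) is an isomorphism \(\CH_{T_\Gamma}(S^+)\simeq\CH(BT_\Gamma)\). So \(\res J(1)=i_0^*j_*[\widetilde C]\). We compute \(j_*[\widetilde C]\) via Atiyah--Bott localization on \(\widetilde C\). Its \(T_\Gamma\)-fixed locus is the zero section's fixed points: the \(2^{n-1}\) torus-fixed points \(p_K\) of \(X\), one for each weight line \(\ell_K\) of weight \(\lambda_K\) (\(|K|\) even), because each lies in the zero section. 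The tangent space of \(\widetilde C\) at \(p_K\) is \(T_{p_K}X\oplus\ell_K\). Localization then gives
\[
  e(S^+)^{-1}\,j_*[\widetilde C]\Big|_0\;=\;\sum_K \frac{1}{e(T_{p_K}X)\,\lambda_K}.
\]
Hence \([C]=\sum_K e(S^+)/(\lambda_K\,e(T_{p_K}X))\). This is cleaner at \(K=\varnothing\) if we instead use a single fixed-point contribution. Indeed, \(C\) is smooth near the point \(\ell\setminus0\), and the class of a \(T\)-invariant subvariety through \(0\) can be read off the normal space at a generic point of the line \(\ell\). The key identity we would prove is
\[
  [C]=\prod_{\lambda\in N}\lambda ,
\]
where \(N\) is the multiset of weights of \(S^+/T_{v}C\) for \(v\in\ell\setminus 0\). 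This follows by restricting to the invariant affine line \(\ell\), on which the class pulls back to the top Chern class of the normal bundle, together with injectivity of restriction \(\CH_{T}(S^+)\to\CH_T(\ell)\); both rings equal \(\CH(BT)\).

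Then comes the weight computation. We have \(T_vC=\ell\oplus T_{[\ell]}X\otimes\ell=\ell\oplus\mathfrak g\cdot v\). The weights of \(\mathfrak g\cdot v\) are \(z+\alpha\) for roots \(\alpha\) with \(\alpha\) negative and \(z+\alpha\) a weight. The negative roots \(-x_i-x_j\) give \(\lambda_{\{i,j\}}\), so \(T_vC\) has weights \(\lambda_I\) with \(|I|\le2\). The normal weights are therefore \(\lambda_I\) for \(|I|\ge4\) even, which are \(\binom n2+1\) fewer than \(2^{n-1}\), matching \(\delta_n\). Hence the integral class is \(\prod_{I\in\Even_n}(z-\sum_{i\in I}x_i)\).

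This forces a correction, since it is not \(E_n\), which lacks \(z\). The subtlety, and the main obstacle, is that restricting to the line \(\ell\) is only injective after modding out by the kernel, and a \(T\)-stable line has nontrivial weight \(z\). The correct statement is \(\iota_\ell^*[C]\equiv\prod\lambda_I\) modulo the ideal generated by \(z\), which is the weight of \(\ell\). So we will determine \([C]\) exactly by comparing all fixed lines \(\ell_K\). Since \(C\) is \(W\)-stable, we get \([C]\equiv\prod_{I\in\Even_n}(\lambda_K-\lambda_{K\triangle I}')\) modulo \(\lambda_K\) for each \(K\), where the factors are the normal weights at \(\ell_K\) shifted. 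Mod two, \(\lambda_K-\lambda_{K\triangle I}=\sum_{i\in I}x_i\), independent of \(z\). So the polynomial \(D=[C]-E_n\), of degree \(\delta_n\), vanishes modulo every \(\lambda_K\) for \(2^{n-1}\) pairwise coprime linear forms. Its degree \(\delta_n<2^{n-1}\) then forces \(D=0\) in \(\F[z,x]\). Verifying these congruences carefully, including the identification of normal weights at each \(\ell_K\) via Weyl-conjugation, is the step we expect to require the most care.
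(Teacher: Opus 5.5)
Your final argument is correct. Its geometric core is the same as the paper's. On $S^+\setminus\{0\}$ the cone is smooth, and the self-intersection formula restricted to the punctured line $\ell^\times$ gives $[C]|_{\ell^\times}=c_{\delta_n}(N)$. Since $\Ch_{T_\Gamma}(\ell^\times)=\Ch(BT_\Gamma)/(z)$, the normal weights $\lambda_I$, $I\in\Even_n$, yield $\res J(1)\equiv E_n\pmod z$.

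You differ from the paper in how the congruence is removed.
\begin{itemize}
\item The paper uses only this one congruence together with the invariant ring $\Ch(BT_\Gamma)^W=\F[t,c_1,\ldots,c_n]$ of Lemma~\ref{lem:special-clifford-invariants}. Since $\delta_n<\deg t$, the class lies in $\F[c_1,\ldots,c_n]$, which meets $(z)$ trivially.
\item You obtain a congruence modulo every $\lambda_K\equiv z+\sum_{i\in K}x_i$. You can get this from the normal weights at each fixed line $\ell_K^\times$. More simply, apply $W$ to the single congruence: modulo two, $W$ fixes the $x_i$ up to permutation and sends $z\mapsto z+h$, $h\in H$, while both $\res J(1)$ and $E_n$ are $W$-invariant. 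Then $D=\res J(1)-E_n$ is divisible by the product of the $2^{n-1}$ distinct, hence pairwise coprime, linear forms $z+h$. That product is $t$ itself, and $\deg D=\delta_n<2^{n-1}$ forces $D=0$.
\end{itemize}
Your route bypasses the invariant-ring lemma and needs only that the weights are distinct modulo two. The paper's route uses a single fixed line but relies on the invariant-ring computation, which it needs elsewhere anyway. Your divisibility-by-$t$ mechanism is in fact the same one that drives the proof of that lemma.

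You should delete the intermediate claims you later retract, because as written they are wrong.
\begin{itemize}
\item The Atiyah--Bott sum is correct after localization, but it is never used.
\item The ``key identity'' $[C]=\prod_{\lambda\in N}\lambda$, obtained by restricting to the whole line $\ell$, is false, and your diagnosis of why is off. Restriction $\Ch_{T_\Gamma}(S^+)\to\Ch_{T_\Gamma}(\ell)$ is an isomorphism, so injectivity is not the problem. What fails is that $\ell$ passes through the vertex $0$, where $C$ is singular and not regularly embedded. The self-intersection formula is therefore only available on $S^+\setminus\{0\}$, hence only on $\ell^\times$, and restriction to $\ell^\times$ kills $z$.
\end{itemize}
Stated that way, the congruence modulo $z$ (and modulo $\lambda_K$ at $\ell_K^\times$) follows directly, and the incorrect intermediate value $\prod_{I\in\Even_n}\lambda_I$ never appears. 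That product is not even $W$-invariant, which already rules it out.
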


\begin{proof}
Set
\(\Theta=\res J(1)\in\Ch^{\delta_n}(BT_\Gamma)^W\), and use the same symbol
for the corresponding class in \(\Ch_{T_\Gamma}^{\delta_n}(S^+)\).
Put \(C^\circ=C\setminus\{0\}\). Its restriction to
\(S^+\setminus\{0\}\) is the \(T_\Gamma\)-equivariant fundamental class of
\(C^\circ\). Since
\(C^\circ\simeq\widetilde C\setminus X\) is smooth, the closed immersion
\(C^\circ\hookrightarrow S^+\setminus\{0\}\) is regular of codimension
\(\delta_n\). Let $N=N_{C^\circ/(S^+\setminus\{0\})}|_{\ell^\times}$. The equivariant self-intersection formula
\cite[Section~2, property~(4)]{EdidinGrahamLocalization}, followed by
restriction to \(\ell^\times\), gives
\[
  \Theta|_{\ell^\times}=c_{\delta_n}^{T_\Gamma}(N).
\]

Since \(\ell\) has \(T_\Gamma\)-weight \(z\), the localization sequence for the zero
section of the equivariant line \(\ell\) gives
\[
  \Ch_{T_\Gamma}(\ell^\times)=\Ch_{T_\Gamma}(\Spec F)/(z).
\]
Thus restriction to \(\ell^\times\) means imposing \(z=0\).

The \(T_\Gamma\)-representation \(S^+\) splits as the direct sum of its
one-dimensional weight spaces.  Let \(U\subset V\) be the maximal isotropic
subspace corresponding to \([\ell]\).  The tangent space to the ordinary
Grassmannian at \([U]\) is \(\operatorname{Hom}(U,V/U)\).  The quadratic form
identifies \(V/U\) with \(U^\vee\), and differentiating the isotropy condition
identifies the tangent space to \(X=\OGr^+(n,V)\) with the skew-symmetric
homomorphisms.  Hence $T_{[\ell]}X\simeq\bigwedge\nolimits^2 U^\vee$. With the chosen torus
coordinates, its weights are
\[
  \lambda_{\{i,j\}}-\lambda_\varnothing=-(x_i+x_j),
  \qquad 1\le i<j\le n.
\]
Their first Chern classes in the mod-two equivariant Chow ring are therefore
\(x_i+x_j\).

The tangent direction along \(\ell^\times\) is the weight direction indexed by
\(I=\varnothing\). Along \(\ell^\times\), the tangent bundle of \(C^\circ\) is
the direct sum of the ambient weight-line subbundles indexed by
\(I=\varnothing\) and by the two-element subsets \(I\). Consequently, \(N\) is
the direct sum of the remaining weight-line bundles, indexed by the even
subsets \(I\) with \(|I|\ge4\). For such \(I\), the corresponding character is
\[
  \lambda_I=z-\sum_{i\in I}x_i.
\]
After restriction to \(\ell^\times\) and reduction modulo two, its first Chern
class is \(\sum_{i\in I}x_i\). Hence
\[
  \Theta|_{\ell^\times}=E_n
  \quad\text{in }\Ch_{T_\Gamma}(\ell^\times),
\]
or equivalently \(\Theta\equiv E_n\pmod z\).

It remains to remove the congruence. By Lemma~\ref{lem:special-clifford-invariants},
\(\Ch(BT_\Gamma)^W=\F[t,c_1,\ldots,c_n]\), with
\(\deg t=2^{n-1}\).  Since
\[
  \deg \Theta=\delta_n=2^{n-1}-1-\binom n2<2^{n-1},
\]
the class \(\Theta\) has no \(t\)-term and therefore lies in
\(\F[c_1,\ldots,c_n]\). The polynomial \(E_n\) is symmetric in \(x_1,\ldots,x_n\) and is independent
of \(z\), so \(E_n\in\F[c_1,\ldots,c_n]\). The intersection of the
ideal \((z)\subset \F[z,x_1,\ldots,x_n]\) with
\(\F[c_1,\ldots,c_n]\subset\F[x_1,\ldots,x_n]\) is zero. Therefore the
congruence modulo \(z\) implies \(\Theta=E_n\).
\end{proof}

\section{The spinor-cone projection formula and the \texorpdfstring{$\Spin(10)$}{Spin(10)} class}

Throughout this section, \(n\ge 4\) remains fixed, and we use the notation
introduced in Section~\ref{sec:cone}.  Choose a Borel subgroup \(B\) with
\(T_\Gamma\subset B\subset P\), so that \(\ell\) is the highest-weight line of \(S^+\).
Thus \(P\) is the standard maximal parabolic attached to the chosen spin node.

For a simple root \(\alpha\) with respect to \(B\), with corresponding
reflection \(s_\alpha\), let
\[
  \partial_\alpha(f)=(f-s_\alpha(f))/\alpha.
\]
If \(w=s_{\alpha_1}\cdots s_{\alpha_r}\) is a reduced decomposition, set
\(\partial_w=\partial_{\alpha_1}\circ\cdots\circ\partial_{\alpha_r}\).
By Demazure~\cite[\S4.5, Th\'eor\`eme~1(a)]{Demazure}, this operator is
independent of the chosen reduced decomposition.

For a simple root \(\alpha\), let \(P_\alpha\supset B\) be the corresponding
standard rank-one parabolic.  In equivariant Chow theory,
\cite[Theorem~6.3]{BrionEquivariant} identifies \(\partial_\alpha\) with the
operator \(q_\alpha^*\circ(q_\alpha)_*\) for
\(q_\alpha:\Gamma/B\to\Gamma/P_\alpha\), and proves its compatibility with
equivariant proper push-forwards.  Brion's
\cite[Proposition~6.4 and its proof]{BrionEquivariant} identifies the
iterated operators associated with reduced words with Schubert push-forwards.

We shall use the elementary \(\Ch(BT_\Gamma)^W\)-linearity
\begin{equation}\label{eq:demazure-invariant-linearity}
  \partial_w(uf)=u\partial_w(f)
\end{equation}
for \(u\in\Ch(BT_\Gamma)^W\), \(w\in W\), and \(f\in\Ch(BT_\Gamma)\).

Let \(W_P\subset W\) be the Weyl group of the Levi subgroup of \(P\).
Since \(P\) corresponds to a spin node, \(W_P\) is of type \(A_{n-1}\).
Let \(w^P:=w_0^Ww_0^{W_P}\), the element of maximal length among the minimal
representatives of \(W/W_P\).  Then
\(\ell_W(w^P)=\dim(\Gamma/P)=\binom n2\), where \(\ell_W\) denotes the
Coxeter length function.

Choose a reduced decomposition of \(w^P\) and form the associated
Bott--Samelson variety \(Z_{w^P}\).  Its composite map
\(\beta:Z_{w^P}\to\Gamma/P\) is birational: its image is the Schubert variety
indexed by the maximal coset representative, namely all of \(\Gamma/P\).
Let \(\pi:\Gamma/P\to\Spec F\) be the structural morphism.  For
\(a\in\Ch_P(\Spec F)=\Ch_\Gamma(\Gamma/P)\), the projection formula gives
\(\beta_*\beta^*(a)=a\).  Repeated application of Brion's rank-one
push-forward formula along the Bott--Samelson tower therefore gives
\begin{equation}\label{eq:parabolic-demazure}
  \res\bigl(\pi_*(a)\bigr)=\partial_{w^P}(a_{T_\Gamma}),
  \qquad a\in\Ch_P(\Spec F),
\end{equation}
where \(a_{T_\Gamma}\) is the torus restriction of \(a\).  This is the
parabolic Demazure push-forward formula used below.

\begin{lemma}\label{lem:parabolic-lifting}
The image of the restriction map \(\Ch(BP)\to\Ch(BT_\Gamma)\) is the
\(W_P\)-invariant subring
\[
  \Ch(BT_\Gamma)^{W_P}=\F[z,c_1,\ldots,c_n].
\]
In particular, every \(W\)-invariant torus class has a lift to \(\Ch(BP)\).
\end{lemma}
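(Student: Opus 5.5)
Two things must be shown: the restriction image \(\Ch(BP)\to\Ch(BT_\Gamma)\) equals \(\Ch(BT_\Gamma)^{W_P}\), and this invariant ring is \(\F[z,c_1,\ldots,c_n]\). The last assertion then follows at once, since \(\Ch(BT_\Gamma)^W\subset\Ch(BT_\Gamma)^{W_P}\).

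\emph{Step 1: reduce to the Levi.} Let \(L\subset P\) be the Levi subgroup containing \(T_\Gamma\), so \(P=L\ltimes R_u(P)\). The unipotent radical is split unipotent, hence an iterated extension of affine spaces. Thus \(BL\to BP\) induces an isomorphism \(\Ch(BP)\simeq\Ch(BL)\) compatible with restriction to \(T_\Gamma\) (homotopy invariance applied to Totaro approximations, since \(E\times_L P\to E\times_P P\) is an affine-space fibration). So it suffices to find the restriction image for \(L\).

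\emph{Step 2: identify \(L\) and its image.} Since \(P\) is the maximal parabolic of the spin node, \(L\) is a connected reductive group of type \(A_{n-1}\) with maximal torus \(T_\Gamma\). I claim \(L\) is isomorphic to a product \(\mathrm{GL}_n\times\Gm\). Concretely, \(L\) is the stabilizer in \(\Gamma\) of the pair of complementary maximal isotropic subspaces \(U,U'\). Its image in \(\mathrm{SO}(V)\) is \(\mathrm{GL}(U)\), and the extra \(\Gm\) factor comes from the central torus of \(\Gamma\). Derived groups of type \(A\) that are not simply connected can occur, but the spinor norm provides the splitting. In the coordinates \(z,x_1,\ldots,x_n\), the characters \(x_i\) are the weights of \(U^\vee\) (or \(U\)) and \(z\) is the character by which \(L\) acts on \(\ell\). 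The Chern classes of the \(L\)-representations \(U\) and \(\ell\) therefore restrict to \(c_1,\ldots,c_n\) and \(z\), so \(\F[z,c_1,\ldots,c_n]\) lies in the image. Conversely, the image lies in the \(W_L=W_P\)-invariants.

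\emph{Step 3: compute the invariants.} \(W_P\) is the symmetric group \(\mathfrak S_n\) permuting the \(x_i\). It fixes \(z\), because the permutations lie in the Levi, which stabilizes \(\ell\) and hence its weight \(z\). Therefore \(\F[z,x_1,\ldots,x_n]^{\mathfrak S_n}=\F[z][x]^{\mathfrak S_n}=\F[z,c_1,\ldots,c_n]\), using the classical symmetric function theorem, valid over any ring. Combining this with Step 2, the image equals the invariant ring.

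\emph{Main obstacle.} The delicate point is Step 2: justifying that \(L\) has the product form, or at least that \(z\) and the \(c_i\) are restrictions of genuine characteristic classes of \(L\). If the \(\mathrm{GL}_n\times\Gm\) identification is awkward, I would argue directly. The character \(z\) of \(T_\Gamma\) extends to a character of \(P\), namely the action on \(\ell\), which gives \(z\). The \(L\)-representation \(U\) (via \(L\to\mathrm{GL}(U)\)) has weights \(\pm x_i\), and reducing mod \(2\) gives \(c_i\). This avoids needing the exact isomorphism type of \(L\).
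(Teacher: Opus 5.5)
Your proposal is correct and is essentially the paper's proof: homotopy invariance reduces to the Levi, the Chern classes of the tautological representation \(\mathcal U\) and of the line \(\ell\) place \(\F[z,c_1,\ldots,c_n]\) in the image, the image lies in the \(W_P\)-invariants, and the symmetric function theorem gives \(\F[z,x_1,\ldots,x_n]^{\mathfrak S_n}=\F[z,c_1,\ldots,c_n]\). The identification \(L\simeq\mathrm{GL}_n\times\Gm\) is not needed, so your fallback argument, which is exactly the paper's, is the right justification for Step 2; the identification does hold, but it comes from the actions on \(\mathcal U\) and \(\ell\), whereas the spinor norm together with \(\mathcal U\) only gives a degree-two isogeny.
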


\begin{proof}
Let \(\mathcal U\) be the rank-\(n\) \(P\)-representation given by the
tautological maximal isotropic subspace.  Its \(T_\Gamma\)-weights are
\(x_1,\ldots,x_n\), so the restrictions of its Chern classes are
\(c_1,\ldots,c_n\).  The \(P\)-character on the highest-weight line \(\ell\)
has first Chern class \(z\).  Thus the image contains
\(\F[z,c_1,\ldots,c_n]\).

Let \(L\) be a Levi subgroup of \(P\) containing \(T_\Gamma\).  Homotopy invariance
for the unipotent radical identifies \(\Ch(BP)\) with \(\Ch(BL)\), and the
restriction image is therefore fixed by the Weyl group \(W_P=N_L(T_\Gamma)/T_\Gamma\).
Since \(W_P\simeq\mathfrak S_n\) permutes the \(x_i\) and fixes \(z\), the
fundamental theorem of symmetric polynomials gives
\(\Ch(BT_\Gamma)^{W_P}=\F[z,c_1,\ldots,c_n]\).  This proves equality.  Finally,
\(\Ch(BT_\Gamma)^W\subset\Ch(BT_\Gamma)^{W_P}\), so every \(W\)-invariant class has the
asserted lift.
\end{proof}

\begin{lemma}\label{lem:cone-demazure}
Let \(\theta\in\Ch_P(\Spec F)\), and write \(\theta_{T_\Gamma}\in\Ch(BT_\Gamma)\) for its
torus restriction.  Then
\[
  \res J(\theta)=\partial_{w^P}(\theta_{T_\Gamma}\Delta),
\]
where \(J\) is the push-forward in \eqref{eq:spinor-cone-pushforward},
\(\res\) is the restriction map in \eqref{eq:torus-restriction}, and
\(\Delta=c^{T_\Gamma}_{2^{n-1}-1}(S^+/\ell)\), equivalently
\(\Delta=\prod_{0\ne I\subset\{1,\ldots,n\},\ |I|\text{ even}}
(z+\sum_{i\in I}x_i)\).
\end{lemma}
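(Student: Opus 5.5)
The plan is to factor \(j\) through the zero section and apply the parabolic Demazure formula \eqref{eq:parabolic-demazure} to a class on \(\Gamma/P\).

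\emph{Factorization.} The map \(j:\widetilde C\to S^+\) factors as
\[
  \widetilde C=\Gamma\times_P\ell\xrightarrow{\;\iota\;}\Gamma\times_P S^+\simeq (\Gamma/P)\times S^+\xrightarrow{\;p\;} S^+ .
\]
Here \(\iota\) is the inclusion of the subbundle \(\ell\) into the trivial bundle, and \(p\) is the projection. The isomorphism \(\Gamma\times_P S^+\simeq\Gamma/P\times S^+\) is \([g,v]\mapsto(gP,gv)\), which is valid because \(S^+\) is a \(\Gamma\)-representation. Under the homotopy identifications \(\Ch_\Gamma(\widetilde C)\simeq\Ch_\Gamma(\Gamma/P)\simeq\Ch_\Gamma(\Gamma/P\times S^+)\), the push-forward \(\iota_*\) is multiplication by the top Chern class of the equivariant normal bundle. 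That normal bundle is \(\Gamma\times_P(S^+/\ell)\). So \(\iota_*(\theta)=\theta\cdot c_{2^{n-1}-1}^{P}(S^+/\ell)\).

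This is the step I expect to be the main obstacle. It is the standard fact that a zero section of a vector bundle \(E\to Y\), composed with the homotopy isomorphism, acts as multiplication by \(c_{\rm top}(E)\). One has to check it in the equivariant setting through Edidin--Graham approximations \cite{EdidinGraham}. Concretely, view \(\Gamma\times_P\ell\subset\Gamma\times_P S^+\) as a sub-vector-bundle over \(\Gamma/P\). Pull back along the bundle projection \(\Gamma\times_P S^+\to\Gamma\times_P(S^+/\ell)\). Then \([\Gamma\times_P\ell]\) is the pullback of the class of the zero section of \(\Gamma\times_P(S^+/\ell)\), and that class is \(c_{\rm top}\) of this bundle.

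\emph{Projection.} Next, \(p\) is the base change of \(\pi:\Gamma/P\to\Spec F\) along \(S^+\to\Spec F\). Homotopy invariance on both sides then identifies \(p_*\) with \(\pi_*:\Ch_\Gamma(\Gamma/P)\to\Ch(B\Gamma)\). This uses compatibility of proper push-forward with flat pullback. Hence
\[
  J(\theta)=\pi_*\bigl(\theta\cdot c^P_{2^{n-1}-1}(S^+/\ell)\bigr).
\]

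\emph{Torus restriction.} Applying \eqref{eq:parabolic-demazure} to \(a=\theta\cdot c^P(S^+/\ell)\) gives \(\res J(\theta)=\partial_{w^P}(\theta_{T_\Gamma}\Delta)\). Here the torus restriction of \(c^P_{2^{n-1}-1}(S^+/\ell)\) is computed from the weights \(\lambda_I\), \(I\neq\varnothing\), of \(S^+/\ell\). Modulo two, \(c_1(\lambda_I)=z+\sum_{i\in I}x_i\), so this restriction is the product \(\Delta\) in the statement. This agrees with \(c^{T_\Gamma}_{2^{n-1}-1}(S^+/\ell)\).

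As a sanity check, the degrees match. We have \(\deg\Delta-\ell_W(w^P)=2^{n-1}-1-\binom n2=\delta_n\), consistent with the degree shift of \(J\).
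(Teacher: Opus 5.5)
Your proposal is correct and follows essentially the same route as the paper. You factor \(j\) through \(\Gamma\times_P S^+\simeq\Gamma/P\times S^+\), identify the first push-forward with multiplication by \(c^P_{2^{n-1}-1}(S^+/\ell)\) and the second with \(\pi_*\), then apply the parabolic Demazure formula. Your justification of the first identification (pulling back the zero-section class of \(\Gamma\times_P(S^+/\ell)\) and then using the projection formula for general \(\theta\)) supplies a detail the paper states without proof.
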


\begin{proof}
Consider the factorization
\[
\begin{tikzcd}[column sep=normal,row sep=normal]
\widetilde C=\Gamma\times_P\ell
  \arrow[r,hook,"i"] \arrow[dr,"j"']
& E=\Gamma\times_P S^+
  \arrow[d,"q"] \\
& S^+ .
\end{tikzcd}
\]
Here \(q\) is induced by \([g,s]\mapsto gs\).  Since \(S^+\) is a
\(\Gamma\)-representation, the map \([g,s]\mapsto(gP,gs)\) is a
\(\Gamma\)-equivariant isomorphism \(E\simeq\Gamma/P\times S^+\), under
which \(q\) becomes the second projection.

The map \(i\) is a regular embedding of codimension \(2^{n-1}-1\), with
normal bundle \(\Gamma\times_P(S^+/\ell)\).
On equivariant Chow groups, the factorization and homotopy-invariance
identifications give
\[
\begin{tikzcd}[column sep=large,row sep=large]
\Ch_\Gamma(\widetilde C)
  \arrow[r,"i_*"] \arrow[d,"\simeq"']
& \Ch_\Gamma(E)
  \arrow[r,"q_*"] \arrow[d,"\simeq"]
& \Ch_\Gamma(S^+)
  \arrow[d,"\simeq"] \\
\Ch_P(\Spec F)
  \arrow[r,"{\cdot\, c^P_{2^{n-1}-1}(S^+/\ell)}"']
& \Ch_\Gamma(\Gamma/P)
  \arrow[r,"(\pi)_*"']
& \Ch(B\Gamma),
\end{tikzcd}
\]
where \(\pi\colon\Gamma/P\to\Spec F\) is the structural morphism.  Hence
\[
  J(\theta)
  =(\pi)_*\!\left(c^P_{2^{n-1}-1}(S^+/\ell)\,\theta\right).
\]
Restriction of equivariant Chow groups to \(T_\Gamma\) is compatible with proper
push-forward.  The class \(c^P_{2^{n-1}-1}(S^+/\ell)\,\theta\) restricts
to \(\Delta\theta_{T_\Gamma}\).

By \eqref{eq:parabolic-demazure}, for \(a\in\Ch_P(\Spec F)\) with torus
restriction \(a_{T_\Gamma}\), one has
\(\res\bigl(\pi_*(a)\bigr)=\partial_{w^P}(a_{T_\Gamma})\).
For \(a=c^P_{2^{n-1}-1}(S^+/\ell)\,\theta\), one has
\(a_{T_\Gamma}=\Delta\theta_{T_\Gamma}\); this identity therefore gives
\(\res J(\theta)=\partial_{w^P}(\theta_{T_\Gamma}\Delta)\).
\end{proof}

\begin{proof}[Proof of Theorem~\ref{thm:intro-spinor-cone}]
Let \(u\in\Ch(BT_\Gamma)^W\).  By Lemma~\ref{lem:parabolic-lifting}, choose a lift
\(\widetilde u\in\Ch_P(\Spec F)\) of \(u\).  By
Lemma~\ref{lem:cone-demazure},
\(\res J(\widetilde u)=\partial_{w^P}(u\Delta)\).  Since \(u\) is
\(W\)-invariant, it follows from \eqref{eq:demazure-invariant-linearity} that
\(\partial_{w^P}(u\Delta)=u\partial_{w^P}(\Delta)\).

It remains to identify \(\partial_{w^P}(\Delta)\).  By
Lemma~\ref{lem:cone-demazure} applied to \(1\in\Ch_P(\Spec F)\), together with
Proposition~\ref{prop:normalization}, one has \(\partial_{w^P}(\Delta)=E_n\).
Combining these steps,
\[
  \res J(\widetilde u)=\partial_{w^P}(u\Delta)=u\partial_{w^P}(\Delta)=uE_n.
\]
Thus \(uE_n\) is the restriction of the class
\(J(\widetilde u)\in\Ch(B\Gamma)\).  Since \(u\in\Ch(BT_\Gamma)^W\) was arbitrary,
\(E_n\cdot\Ch(BT_\Gamma)^W\subset
\operatorname{Im}\bigl(\Ch(B\Gamma)\to \Ch(BT_\Gamma)^W\bigr)\).
This proves the theorem.
\end{proof}

\begin{proof}[Proof of Corollary~\ref{cor:rank-five-class}]
For \(n=5\), one has \(\delta_5=2^4-1-\binom52=5\).  By \eqref{eq:spinor-cone-En}, the subsets occurring
in \(E_5\) are precisely the four-element subsets, and therefore
\(E_5=\prod_{k=1}^5(c_1+x_k)\).  Since
\(\prod_{k=1}^5(T+x_k)=T^5+c_1T^4+c_2T^3+c_3T^2+c_4T+c_5\), setting
\(T=c_1\) and reducing modulo two gives
\[
  E_5=c_5+c_1c_4+c_1^2c_3+c_1^3c_2.
\]

Let \(\Gamma=\Gamma^+(10)\), let \(T_\Gamma\subset\Gamma\) be the standard
special-Clifford torus, and let \(J=J_5\).  Let \(P\subset\Gamma\) be the
stabilizer of the highest-weight line, and let \(\mathcal U\) be the
tautological rank-five \(P\)-representation.  Set
\(\widetilde u=c_2(\mathcal U)c_3(\mathcal U)\in\Ch_P^5(\Spec F)\), and let
\(u=c_2c_3\in\Ch(BT_\Gamma)^W\) be its torus restriction.  Since
\(\widetilde u\) is a lift of \(u\), Theorem~\ref{thm:intro-spinor-cone} gives
\[
  \res J(\widetilde u)
  =uE_5=c_2c_3\bigl(c_5+c_1c_4+c_1^2c_3+c_1^3c_2\bigr).
\]
Hence the right-hand side lies in the image of
\(\Ch(B\Gamma)\to\Ch(BT_\Gamma)^W\).

Let \(T=T_\Gamma\cap\Spin(10)\). The semisimple torus \(T\) is cut out inside
\(T_\Gamma\) by the quotient character
\[
  f_0=2z-x_1-\cdots-x_5.
\]
Thus \(f_0|_T=0\). Since \(f_0\) reduces modulo two to
\(c_1=x_1+\cdots+x_5\), the induced map
\(\Ch(BT_\Gamma)\to\Ch(BT)\) sends \(c_1\) to zero and fixes
\(c_2,c_3,c_4,c_5\). Hence the displayed
class restricts to \(c_2c_3c_5\).  By functoriality, the pullback of the
corresponding class of \(\Ch(B\Gamma)\) to \(\Ch(B\!\Spin(10))\) has
torus restriction \(c_2c_3c_5\).  This proves the corollary.
\end{proof}

\section{The characteristic image of \texorpdfstring{$B\!\Spin(10)$}{BSpin(10)}}\label{sec:full-image}
Let $G=\Spin(10)$ and let $T\subset G$ be the standard split maximal torus; in the rank-five construction above, $T=T_\Gamma\cap G$.  We use the presentations
\begin{equation}\label{eq:spin10-torus-rings}
        \CH(BT)=\frac{\Z[z,x_1,\ldots,x_5]}{(2z-x_1-\cdots-x_5)},\qquad \Ch(BT)=\frac{\F[z,x_1,\ldots,x_5]}{(x_1+\cdots+x_5)}.
\end{equation}
Let $c_i$ denote the elementary symmetric polynomial in the variables $x_1,\ldots,x_5$.  Thus $c_1=0$ in $\Ch(BT)$.

The Weyl group is of type $D_5$: it is generated by permutations of the $x_i$ and even sign changes.  By \cite[Proposition~3.2]{KarpenkoSpinModTorsion}, the mod-two invariant ring is $\Ch(BT)^W=\F[t,c_2,c_3,c_4,c_5]$, where $t=\prod_{|I|\text{ even}}\left(z+\sum_{i\in I}x_i\right)$ is the restriction of the top Chern class of a half-spin representation; the product is over subsets $I\subset\{1,\ldots,5\}$.  This agrees with the rank-five orbit-product calculation in Lemma~\ref{lem:special-clifford-invariants} after imposing $c_1=0$.  Here $\deg t=16$.
Let \(U\) be the subring from \eqref{eq:UR-subrings}; explicitly,
\[
        U=\F[c_2,c_4,c_5,c_3^2,c_3c_5]
        \subset \F[c_2,c_3,c_4,c_5].
\]
We shall use the following rules for the operations fixed above; see, for example,
\cite[Theorem~9.3 and Remark~9.5]{Brosnan}:
\begin{equation}\label{eq:steenrod-rules}
        \St(x)=x+x^2,\qquad
        \St(\alpha\beta)=\St(\alpha)\St(\beta).
\end{equation}
The operations $\St^i$ are additive.  Here $x$ is the first Chern class of
a torus character, and $\alpha,\beta$ are arbitrary classes in the mod-two
Chow rings considered below.  We shall use the component form of
multiplicativity, and the square identities which follow from multiplicativity
in characteristic $2$, without further comment.

\begin{lemma}\label{lem:rank-five-ambient}
With the notation above, one has
\begin{equation}\label{eq:ambient-bound}
        \im\varphi_G\subset U[t].
\end{equation}
Moreover, the total Steenrod operation acts on $c_2,c_3,c_4,c_5$ by
\[
\begin{aligned}
\St(c_2)&=c_2+c_3+c_2^2,
&\St(c_3)&=c_3+(c_5+c_2c_3)+c_3^2,\\
\St(c_4)&=c_4+c_5+c_2c_4+(c_3c_4+c_2c_5)+c_4^2,
&\St(c_5)&=c_5+c_2c_5+c_3c_5+c_4c_5+c_5^2.
\end{aligned}
\]
\end{lemma}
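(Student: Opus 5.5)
We will prove the two assertions separately. The Steenrod formulas are a direct torus calculation. The ambient bound \eqref{eq:ambient-bound} comes from restricting to \(\Spin(9)\) and using Karpenko's theorem that the mod-two image for \(\Spin(9)\) consists of squares.

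\emph{Steenrod formulas.} By \eqref{eq:steenrod-rules}, \(\St(x_i)=x_i+x_i^2\), and \(\St\) is multiplicative, so
\[
\St(c_k)=e_k(x_1+x_1^2,\ldots,x_5+x_5^2).
\]
We expand this as a sum of products \(\prod_{i\in K}x_i\prod_{j\in L}x_j^2\) over disjoint \(K,L\) with \(|K|+|L|=k\). We then rewrite each homogeneous component as a polynomial in \(c_1,\ldots,c_5\) and impose \(c_1=0\), using the presentation \eqref{eq:spin10-torus-rings}. The top component is always \(c_k^2\) and the bottom one is \(c_k\). For the middle components we use standard identities such as \(\sum_{i}x_i^2\prod_{j\ne i}x_j=c_1c_5\) and \(e_1(x)e_k(x)=\sum x_i e_k\) modulo two. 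For example, \(\St^1(c_2)=c_1c_2+c_3\) reduces to \(c_3\). Likewise \(\St^1(c_3)=c_1c_3\) vanishes and \(\St^2(c_3)=c_5+c_2c_3\) after setting \(c_1=0\). The remaining components are checked the same way. This part is routine bookkeeping.

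\emph{Ambient bound.} Consider \(\Spin(9)\subset\Spin(10)\), whose maximal torus \(T'\) is obtained by setting \(x_5=0\). Under restriction, \(c_5\mapsto0\), and \(c_2,c_3,c_4\) map to the corresponding elementary classes \(c_2',c_3',c_4'\) of \(x_1,\ldots,x_4\). The half-spin representation of \(\Spin(10)\) restricts to the \(16\)-dimensional spin representation of \(\Spin(9)\), so \(t\) maps to the top-Chern-class generator \(t'\) of \(\Ch(BT')^{W'}\), up to the normalization in \cite[Proposition~3.2]{KarpenkoSpinModTorsion}. By \cite[Theorem~1.1]{KarpenkoSpinModTorsion}, the image for \(\Spin(9)\) is the subring of squares. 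Hence for \(f\in\im\varphi_G\), the reduction \(f|_{T'}\) is a square.

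Write \(f\) coefficientwise in \(t\), and for each coefficient write \(a=A+c_3B\) with \(A,B\in\F[c_2,c_4,c_5,c_3^2]\). Two points must be checked carefully. First, \(c_2',c_3',c_4',t'\) are algebraically independent in \(\Ch(BT')^{W'}\). Second, the squares there form exactly \(\F[c_2'^2,c_3'^2,c_4'^2,t'^2]\). Given these, the square condition forces the odd-\(c_3\) part \(B|_{c_5=0}\) to vanish in each \(t\)-degree. Therefore \(B\in c_5\,\F[c_2,c_4,c_5,c_3^2]\), and \(c_3B\in U\). Since \(A\in U\) already, each coefficient lies in \(U\), which gives \(f\in U[t]\).

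The main obstacle is this identification of the \(\Spin(9)\) restriction, specifically whether \(t\) really restricts to an independent generator \(t'\) rather than to something like \(t'^2\) plus lower terms. If the restriction of \(t\) involves \(c'\)-terms, the coefficientwise separation must be redone by a triangular change of variables in \(t'\). Only the odd-\(c_3\) part matters, so I expect this adjustment to go through.
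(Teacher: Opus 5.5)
Your argument is correct. The Steenrod formulas are obtained exactly as in the paper, but for the ambient bound you take a genuinely different route.

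\emph{The paper's route.} The paper never leaves $\Spin(10)$. It uses Karpenko's integral upper bound for $\Spin(10)$ itself: the subalgebra generated by $\Z[x_1^2,\ldots,x_5^2]^{\mathfrak S_5}$, $c_5$, $f_1,f_2,f_3$ and $t$. It then computes $\overline{f_1}=c_2$, $\overline{f_2}=c_4$ and $\overline{f_3}=c_4^2+c_3c_5+c_2c_3^2$, all of which lie in $U$.

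\emph{Your route, with one correction.} You restrict to $\Spin(9)$ and use Karpenko's complete theorem there. One claim in that step is wrong, though harmlessly: $t$ does not restrict to a generator of $\Ch(BT')^{W'}$. Under $x_5\mapsto 0$, the even subsets $I\subset\{1,\ldots,5\}$ correspond bijectively to all $J\subset\{1,\ldots,4\}$. Since $x_1+\cdots+x_4=0$ in $\Ch(BT')$, the weights $z+h_J$ and $z+h_{J^c}$ coincide, where $h_J=\sum_{j\in J}x_j$. Hence $t|_{T'}=t'^2$, where $t'=\prod_{h\in H'}(z+h)$ has degree $8$ and $H'$ is the three-dimensional image of $\langle x_1,\ldots,x_4\rangle$.

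\emph{The two points you left open.} Both hold: $\Ch(BT')^{W'}=\F[c_2',c_3',c_4',t']$ is a polynomial ring. This follows from the translation argument of Lemma~\ref{lem:special-clifford-invariants} for $H'$. For the faithful $\mathfrak S_4$-action on $\F[x_1,\ldots,x_4]/(x_1+\cdots+x_4)$, note that $c_2',c_3',c_4'$ form a homogeneous system of parameters whose degrees multiply to $24=|\mathfrak S_4|$. Consequently the condition $\sum_r t'^{2r}a_r(c_2',c_3',c_4',0)\in\F[c_2'^2,c_3'^2,c_4'^2,t'^2]$ separates coefficientwise with no change of variables.

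\emph{What your route buys.} The separation in fact forces every $a_r(c_2',c_3',c_4',0)$ to be a square. So your route gives the sharper bound $\im\varphi_G\subset\bigl(R+c_5\F[c_2,c_3,c_4,c_5]\bigr)[t]$, of which you only used the odd-$c_3$ part. The coefficient ring here is the ring $B$ from the proof of Lemma~\ref{lem:M-mod-c5}, which shows $M\subset B$. Conversely, $(c_5)\F[c_2,\ldots,c_5]\subset U$, and the proof of Lemma~\ref{lem:U-normal-form} shows every $c_5$-divisible monomial of $U$ lies in $M$; together these give $B\subset M$. Hence $B=M$, and your bound is already $\im\varphi_G\subset M[t]$. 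The $\St^1/\St^3$ residue elimination in the proof of Theorem~\ref{thm:intro-full-image} would then be unnecessary.

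\emph{Trade-off.} The paper needs only an upper bound for $\Spin(10)$ plus one explicit reduction computation. You spend a deeper input, Karpenko's complete $\Spin(9)$ theorem, and get considerably more back.
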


\begin{proof}
By \cite[Proposition~2.1]{KarpenkoSpinModTorsion}, the image of
$\Phi_G$ is contained in the subalgebra of $\CH(BT)^W$ generated by
$\Z[x_1^2,\ldots,x_5^2]^{\mathfrak S_5}$, $c_5$, $f_1,f_2,f_3$, and $t$.
Since $\varphi_G$ is induced from $\Phi_G$ by reduction modulo two, it
remains only to compute the reductions of the $f_i$.

The $f_i$ are defined recursively in
\cite[(4.1)--(4.2)]{KarpenkoMerkurjevGrassmannians}.  Write
$f_i=2zh_i+A_i$, where $A_i=\sum_\alpha m_{i,\alpha}$ is independent of
$z$ and its signed monomial summands are counted with multiplicity.
Taking the $z$-independent part in the recursion gives
\[
  A_{i+1}
   =\sum_{\alpha<\beta}m_{i,\alpha}m_{i,\beta}
   =\bigl(A_i^2-\sum_\alpha m_{i,\alpha}^2\bigr)/2,
  \qquad
  \overline{f_i}=\overline{A_i}.
\]

Let $p_j=e_j(x_1^2,\ldots,x_5^2)$.  Comparing coefficients in
$c(u)c(-u)=\prod_{r=1}^5(1-x_r^2u^2)$ gives
\[
\begin{aligned}
p_1&=c_1^2-2c_2, &
p_2&=c_2^2-2c_1c_3+2c_4,\\
p_3&=c_3^2+2c_1c_5-2c_2c_4, &
p_4&=c_4^2-2c_3c_5.
\end{aligned}
\]
At the first two stages, with multiplicities retained, one has
$\sum_\alpha m_{i,\alpha}^2=A_i(x_1^2,\ldots,x_5^2)$ for $i=1,2$.
Starting from $f_0=2z-c_1$, it follows that
\[
  A_1=c_2,\quad A_2=(c_2^2-p_2)/2=c_1c_3-c_4,\quad A_3=\bigl((c_1c_3-c_4)^2-(p_1p_3-p_4)\bigr)/2.
\]
Modulo two, the last identity gives
$A_3\equiv c_4^2+c_3c_5+c_2c_3^2+c_1c_3c_4+c_1^2c_2c_4+c_1^3c_5\pmod 2$.
Since $c_1=0$ in $\Ch(BT)$, this gives
$\overline{f_1}=c_2$, $\overline{f_2}=c_4$, and
$\overline{f_3}=c_4^2+c_3c_5+c_2c_3^2$.

Finally, the reduction of
$\Z[x_1^2,\ldots,x_5^2]^{\mathfrak S_5}$ is contained in
$\F[c_2^2,c_3^2,c_4^2,c_5^2]\subset U$, while $c_5$, $t$, and the three
classes just computed lie in $U[t]$.  Hence
$\im\varphi_G\subset U[t]$.

For the Steenrod formulas, applying \eqref{eq:steenrod-rules} to the Chern
roots gives
\[
  \sum_{j=0}^5\St(c_j)u^j
  =\prod_{r=1}^5\bigl(1+(x_r+x_r^2)u\bigr).
\]
Expanding the right-hand side, rewriting its coefficients in elementary
symmetric functions, and setting $c_1=0$ gives the four displayed formulas.
\end{proof}

Let $M\subset \F[c_2,c_3,c_4,c_5]$ be the smallest graded Steenrod-stable
subring containing
\[
        c_2^2,
        \qquad c_3^2,
        \qquad c_4^2,
        \qquad c_5,
        \qquad c_2c_3c_5.
\]
Let \(R\) be the square subring from \eqref{eq:UR-subrings}.  The next lemmas
describe the additive quotient \(U/M\).

\begin{lemma}\label{lem:M-mod-c5}
Let
\(\pi:\F[c_2,c_3,c_4,c_5]\longrightarrow \F[c_2,c_3,c_4]\)
be the quotient homomorphism defined by $c_5\mapsto 0$.  Then $\pi(M)=R$.
Moreover, $M\subset U$.
\end{lemma}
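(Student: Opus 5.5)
The plan is to exploit the fact that the principal ideal \((c_5)\subset\F[c_2,c_3,c_4,c_5]\) is Steenrod-stable, and to sandwich \(M\) between its generators and an explicit Steenrod-stable subring contained in \(U\).

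\emph{Step 1: the ideal \((c_5)\) is Steenrod-stable.} By Lemma~\ref{lem:rank-five-ambient},
\[
\St(c_5)=c_5\,(1+c_2+c_3+c_4+c_5).
\]
Multiplicativity \eqref{eq:steenrod-rules} then gives \(\St(c_5f)=\St(c_5)\St(f)\in(c_5)\) for every \(f\). The operations also preserve \(\F[c_2,c_3,c_4,c_5]\) itself, again by Lemma~\ref{lem:rank-five-ambient} and multiplicativity. Hence \(\St\) descends along \(\pi\) to an operation on \(\F[c_2,c_3,c_4]\), given by the formulas of Lemma~\ref{lem:rank-five-ambient} with \(c_5\) set to zero, and \(\pi\) intertwines the two operations.

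\emph{Step 2: \(R\) is Steenrod-stable, in both rings.} The total operation is a ring homomorphism, so \(\St(a^2)=\St(a)^2\). Since \(\St\) preserves \(\F[c_2,c_3,c_4,c_5]\), the element \(\St(a)^2\) is a square of a polynomial in the \(c_i\). By the Frobenius in characteristic two, it is therefore a polynomial in \(c_2^2,c_3^2,c_4^2,c_5^2\). Each graded component of \(\St(a^2)\) is then again such a square polynomial, so each \(\St^i\) preserves these squares. After applying \(\pi\), the square of anything in \(\F[c_2,c_3,c_4]\) lies in \(R\). Thus \(R\) is Steenrod-stable in \(\F[c_2,c_3,c_4]\).

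\emph{Step 3: an intermediate subring.} Set
\[
S=R+c_5\,\F[c_2,c_3,c_4,c_5].
\]
\begin{itemize}
\item \(S\) is a subring, since \(R\cdot c_5(\cdots)\subset c_5(\cdots)\).
\item \(S\) is Steenrod-stable. For the ideal part this is Step~1. For \(R\), Step~2 in the big ring shows that \(\St\) maps \(R\) into \(\F[c_2^2,c_3^2,c_4^2,c_5^2]\subset R+c_5(\cdots)\).
\item \(S\) contains all five generators of \(M\); the last two lie in the ideal part.
\end{itemize}
Hence \(M\subset S\) by minimality of \(M\).

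Next, \(S\subset U\). First, \(R\subset U\). Second, every monomial \(c_5c_2^ac_3^bc_4^dc_5^e\) lies in \(U\): write \(c_3^b\) as \((c_3^2)^{b/2}\) if \(b\) is even, or as \(c_3\cdot(c_3^2)^{(b-1)/2}\) if \(b\) is odd, and in the odd case absorb \(c_3c_5\). This proves \(M\subset U\).

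\emph{Step 4: \(\pi(M)=R\).} From \(M\subset S\) we get \(\pi(M)\subset\pi(S)=R\). Conversely, \(\pi(M)\) is a subring containing \(\pi(c_2^2)=c_2^2\), \(\pi(c_3^2)=c_3^2\) and \(\pi(c_4^2)=c_4^2\), which generate \(R\). Hence \(R\subset\pi(M)\), and equality holds.

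The only point needing care is Step~2, where one must see that Steenrod operations of squares remain within square polynomials, componentwise. This follows because \(\St\) is a ring homomorphism preserving \(\F[c_2,\dots,c_5]\) and squaring is additive in characteristic two. Everything else is formal.
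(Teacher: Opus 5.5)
Your proposal is correct and follows the paper's proof essentially step for step. The paper uses the same intermediate ring \(B=R+(c_5)A\) and checks its Steenrod-stability via \(\St(c_i^2)=\St(c_i)^2\) together with the stability of the ideal \((c_5)A\); it then gets \(M\subset B\) by minimality, \(\pi(M)=R\) by projecting, and \(B\subset U\) by the same parity-of-\(c_3\) monomial argument. Your Frobenius phrasing of Step~2 is only an abstract version of the paper's ``square the explicit formulas'' step, and the descended operation on \(\F[c_2,c_3,c_4]\) is harmless but not needed.
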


\begin{proof}
Let $A=\F[c_2,c_3,c_4,c_5]$ and set
\(
        B=R+(c_5)A\subset A.
\)
We first check that $B$ is stable under the Steenrod operations.  Squaring the
formulas for $\St(c_2),\St(c_3),\St(c_4)$ in
Lemma~\ref{lem:rank-five-ambient} shows that
$\St(c_i^2)=\St(c_i)^2\in B$ for $i=2,3,4$.  The same lemma shows that the ideal
$(c_5)A$ is carried into itself by the total operation.  Thus $\St(B)\subset B$;
since $B$ is graded, every component $\St^i$ preserves $B$.

The defining generators $c_2^2,c_3^2,c_4^2,c_5,c_2c_3c_5$ of $M$ all lie in
$B$.  Hence $M\subset B$.  Applying $\pi$ gives $\pi(M)\subset \pi(B)=R$,
while the reverse inclusion $R\subset\pi(M)$ is immediate from the generators
$c_2^2,c_3^2,c_4^2$.  Thus $\pi(M)=R$.

It remains to prove $M\subset U$.  Since $M\subset B$, it is enough to show
$B\subset U$.  We have $R\subset U$.  Every monomial of $(c_5)A$ lies in $U$:
if the exponent of $c_3$ is even, use the generator $c_3^2$; if that exponent is
odd, use one factor $c_3c_5$ and the remaining factors $c_2,c_4,c_3^2,c_5$.
Thus $(c_5)A\subset U$, so $B\subset U$ and therefore $M\subset U$.
\end{proof}

\begin{lemma}\label{lem:U-normal-form}
As an additive $R$-module quotient, every class in $U/M$ has a unique representative of the form
\[
        A c_2+B c_4+C c_2c_4,
        \qquad A,B,C\in R.
\]
\end{lemma}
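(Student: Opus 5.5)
We decompose \(U\) into a part lying inside \(M\) and a complementary \(R\)-module, and then use \(\pi\) from Lemma~\ref{lem:M-mod-c5} to prove uniqueness.

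\emph{Step 1: the \(c_5\)-part of \(U\) lies in \(M\).} We first show \((c_5)A\cap U=(c_5)A\subset M\), where \(A=\F[c_2,c_3,c_4,c_5]\). The ring \(M\) contains \(c_5\), \(c_2c_3c_5\) and \(c_2^2,c_3^2,c_4^2\). It also contains the Steenrod images
\[
\St^1(c_5)=c_2c_5,\qquad \St^2(c_5)=c_3c_5,\qquad \St^3(c_5)=c_4c_5,
\]
which we read off from Lemma~\ref{lem:rank-five-ambient}. Every monomial \(c_2^ac_3^bc_4^dc_5^e\) with \(e\ge1\) is then a product of elements of \(M\). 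Take the reduced exponents \(a,b,d\bmod 2\) and absorb the even parts into \(R\subset M\). This leaves \(c_5\) times a squarefree monomial in \(c_2,c_3,c_4\).

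Several cases are immediate: \(c_2c_5\), \(c_3c_5\), \(c_4c_5\) and \(c_2c_3c_5\) are in \(M\). For the products with more factors we use \(c_5\cdot c_5\)-type factorizations where \(e\ge2\). When \(e=1\) we need \(c_2c_4c_5\), \(c_3c_4c_5\) and \(c_2c_3c_4c_5\). We intend to obtain these from further Steenrod operations, for example \(\St^1(c_4c_5)\) and \(\St^2(c_2c_3c_5)\), combined with products and the classes already found. This is the main obstacle: we must verify, by the explicit formulas of Lemma~\ref{lem:rank-five-ambient}, that \(c_2c_4c_5\), \(c_3c_4c_5\) and \(c_2c_3c_4c_5\) lie in \(M\).

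The first attempt is
\[
\St^1(c_4c_5)=c_5\St^1(c_4)+c_4\St^1(c_5)=c_5\cdot c_5+c_4c_2c_5,
\]
which gives \(c_2c_4c_5\in M\), since \(c_5^2\in M\). Next, \(\St^2(c_4c_5)\) should yield \(c_3c_4c_5\) modulo known terms. Finally, \(c_2c_3c_4c_5\) should come from \(\St^1\) or \(\St^2\) applied to \(c_3c_4c_5\) or to \(c_2c_3c_5\), modulo known terms.

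\emph{Step 2: existence.} Modulo \((c_5)A\), the ring \(U\) reduces to \(\F[c_2,c_4,c_3^2]\). As an \(R\)-module this is free on \(1,c_2,c_4,c_2c_4\), because \(\F[c_2,c_4]\) is free over \(\F[c_2^2,c_4^2]\) on these four elements. Since \(R\subset M\), the component along \(1\) lies in \(M\). Every element of \(U\) is therefore congruent modulo \(M\) to an expression \(Ac_2+Bc_4+Cc_2c_4\) with \(A,B,C\in R\).

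\emph{Step 3: uniqueness.} Suppose \(Ac_2+Bc_4+Cc_2c_4\in M\). Applying \(\pi\) and using \(\pi(M)=R\) from Lemma~\ref{lem:M-mod-c5} puts this expression in \(R\). The freeness of \(\F[c_2,c_3,c_4]\) over \(R\) on the eight monomials \(c_2^{\epsilon_2}c_3^{\epsilon_3}c_4^{\epsilon_4}\) with \(\epsilon_i\in\{0,1\}\) then forces \(A=B=C=0\).
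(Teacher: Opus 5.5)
Your architecture is the paper's: put the $c_5$-divisible part of $U$ inside $M$, reduce modulo $(c_5)A$ to the free $R$-module on $1,c_2,c_4,c_2c_4$, and get uniqueness from $\pi(M)=R$. Steps 2 and 3 are fine.

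The gap is Step 1. You flag it but do not close it, and the one computation you give there is wrong. Your Steenrod indices are shifted by one. The class $\St(c_5)=c_5+c_2c_5+c_3c_5+c_4c_5+c_5^2$ has components in degrees $5,7,8,9,10$. So $\St^1(c_5)=c_1c_5=0$, $\St^2(c_5)=c_2c_5$, $\St^3(c_5)=c_3c_5$, and $\St^4(c_5)=c_4c_5$.

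The memberships $c_2c_5,c_3c_5,c_4c_5\in M$ survive this correction. However, your formula $\St^1(c_4c_5)=c_5^2+c_2c_4c_5$ is not even homogeneous: its terms have degrees $10$ and $11$. The Cartan formula actually gives
\[
  \St^1(c_4c_5)=c_5^2,\qquad
  \St^2(c_4c_5)=c_4\cdot c_2c_5+c_2c_4\cdot c_5=0,
\]
\[
  \St^3(c_4c_5)=c_4\cdot c_3c_5+c_5\cdot c_2c_5+(c_3c_4+c_2c_5)c_5=0.
\]
Since $c_2c_4c_5$ and $c_3c_4c_5$ sit in degrees $11$ and $12$, no Steenrod operation of $c_4c_5$ produces them, so your proposed route fails. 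Without these two classes and $c_2c_3c_4c_5$, the existence claim in Step 2 is unproved. Their $R$-multiples are residues not of the form $Ac_2+Bc_4+Cc_2c_4$.

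The missing input is to start from $c_2c_5$:
\[
  \St^4(c_2c_5)=c_3^2c_5+c_2c_4c_5+c_2^3c_5,\qquad
  \St^5(c_2c_5)=c_3c_4c_5+c_2c_5^2+c_2^2c_3c_5.
\]
The extra terms are products of $c_5$, $c_2c_5$, $c_3c_5$ with elements of $R$, so they lie in $M$.

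For the last class, the paper uses $\St^4(c_2c_3c_5)=c_2c_3c_4c_5+c_2^3c_3c_5$. Of your suggestions, $\St^2(c_3c_4c_5)=c_2c_3c_4c_5+c_4c_5^2$ also works once $c_3c_4c_5\in M$ is known. By contrast, $\St^1$ and $\St^2$ of $c_2c_3c_5$ land in degrees $11$ and $12$ rather than $14$.

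With these three memberships established, your Steps 2 and 3 finish the proof exactly as in the paper.
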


\begin{proof}
We first show that every $c_5$-divisible monomial in $U$ belongs to $M$.
Since $c_5\in M$, Lemma~\ref{lem:rank-five-ambient} gives
\[
  c_2c_5=\St^2(c_5),
  \qquad
  c_3c_5=\St^3(c_5),
  \qquad
  c_4c_5=\St^4(c_5),
\]
so these three classes lie in $M$.  The class $c_2c_3c_5$ belongs to $M$ by
definition.  Using \eqref{eq:steenrod-rules} and
Lemma~\ref{lem:rank-five-ambient}, one has
$\St^4(c_2c_5)=c_3^2c_5+c_2c_4c_5+c_2^3c_5$ and
$\St^5(c_2c_5)=c_3c_4c_5+c_2c_5^2+c_2^2c_3c_5$.
It follows that $c_2c_4c_5$ and $c_3c_4c_5$ belong to $M$.  Finally,
\[
        \St^4(c_2c_3c_5)=c_2c_3c_4c_5+c_2^3c_3c_5.
\]
Since the second term is in $M$, we get $c_2c_3c_4c_5\in M$.  Thus the eight
classes
\[
        c_5,\ c_2c_5,\ c_4c_5,\ c_2c_4c_5,\ c_3c_5,\ c_2c_3c_5,\ c_3c_4c_5,\ c_2c_3c_4c_5
\]
belong to $M$.  Multiplying them by arbitrary elements of $R$ and by powers of
$c_5$ gives every monomial of $U$ that is divisible by $c_5$.

Modulo such monomials, $U$ becomes $\F[c_2,c_4,c_3^2]$, which is a free
$R$-module with basis $1,c_2,c_4,c_2c_4$.  Since $R\subset M$, the basis vector
$1$ is zero in $U/M$.  If an $R$-linear combination of $c_2,c_4,c_2c_4$ lies in
$M$, then its image under $\pi$ lies in $\pi(M)=R$; freeness over $R$ forces all
three coefficients to vanish.  Together with $M\subset U$, this proves the
asserted existence and uniqueness.
\end{proof}

\begin{lemma}\label{lem:halfspin-low-chern}
Let $S^+$ be a half-spin representation of $\Spin(10)$, and write
$s_i=c_i(S^+)|_T$ and $t=s_{16}$.  Then $s_i=0$ for $1\le i<8$.
Consequently, $\St^i(t^r)=0$ for $1\le i<8$ and $r\ge0$.
\end{lemma}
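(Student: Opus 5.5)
The plan is to compute the total Chern class \(c(S^+)|_T\) in \(\Ch(BT)\) explicitly, using the fact that the mod-two weights of \(S^+\) are the translates of \(z\) by an \(\F\)-linear subspace. Specifically, the \(T\)-weights of \(S^+\) reduce modulo two to \(z+h\), where \(h\) runs over
\[
  H=\Bigl\{\textstyle\sum_{i\in I}x_i:\ |I|\ \text{even}\Bigr\}\subset\Ch^1(BT).
\]
I will first check that these \(16\) linear forms are distinct and form a \(4\)-dimensional \(\F\)-subspace of \(\Ch^1(BT)=\F x_1\oplus\cdots\oplus\F x_5/(x_1+\cdots+x_5)\). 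The even subsets form a \(4\)-dimensional space under symmetric difference. The only nonempty \(I\) with \(\sum_{i\in I}x_i=0\) in \(\Ch^1(BT)\) is \(I=\{1,\ldots,5\}\), which has odd cardinality, so the map from even subsets to \(H\) is injective.

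Next set \(f(Y)=\prod_{h\in H}(Y+h)\), a polynomial in a variable \(Y\) with coefficients in \(\Ch(BT)\). I claim that \(f\) is additive, i.e.\ \(f(Y)=Y^{16}+a_3Y^8+a_2Y^4+a_1Y^2+a_0Y\). This follows from the recursion in the proof of Lemma~\ref{lem:special-clifford-invariants}, where \(t_j(Y)=t_{j-1}(Y)\bigl(t_{j-1}(Y)+a_j\bigr)\) with \(a_j=t_{j-1}(h_j)\). If \(t_{j-1}\) is additive, then so is \(t_{j-1}^2+a_jt_{j-1}\), because squaring is additive in characteristic \(2\). Induction starting from \(t_0=Y\) gives the claim. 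In particular \(f(z)=t\).

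Now introduce a degree-zero formal variable \(u\) and homogenize the total Chern class. Additivity gives
\[
  \prod_{h\in H}(u+z+h)=f(u+z)=f(u)+f(z)=u^{16}+a_3u^8+a_2u^4+a_1u^2+a_0u+t .
\]
The coefficient of \(u^{16-i}\) is \(s_i\). Hence \(s_i\neq0\) is possible only for \(i\in\{0,8,12,14,15,16\}\), and in particular \(s_i=0\) for \(1\le i<8\). This also recovers \(s_{16}=t\).

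For the Steenrod statement I will use \eqref{eq:steenrod-rules}. Writing \(w\) for the mod-two first Chern classes of the weights, we have
\[
  \St(t)=\prod_w(w+w^2)=t\prod_w(1+w)=t\,c(S^+)|_T .
\]
Multiplicativity then gives \(\St(t^r)=t^r\bigl(1+s_8+s_{12}+\cdots\bigr)^r\). The degree-\(i\) component of the second factor vanishes for \(1\le i<8\), so \(\St^i(t^r)=0\) for \(1\le i<8\); the case \(r=0\) is trivial. The only point requiring care is the additivity of \(f\) together with the linear independence in \(\Ch^1(BT)\), and both are handled by the earlier recursion.
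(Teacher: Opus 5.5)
Your proposal is correct and is essentially the paper's proof: the same injectivity check for even subsets modulo \(x_1+\cdots+x_5\), the same inductive additivity of \(\prod_{h\in H}(Y+h)\), and the same identity \(\St(t)=t\,c(S^+)|_T\). Your auxiliary variable \(u\) is just a tidier way to read off the graded pieces than the paper's substitution \(y=1+z\); one harmless slip is that \(\Ch^1(BT)\) also contains the summand \(\F z\), but the \(h_I\) lie in the \(x\)-part, so the injectivity argument is unaffected.
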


\begin{proof}
The weights of $S^+$, viewed in $\Ch(BT)$ via
\eqref{eq:spin10-torus-rings}, are $z+\sum_{i\in I}x_i$ with
$I\subset\{1,\ldots,5\}$ of even cardinality.  Let
$h_I=\sum_{i\in I}x_i$.  Modulo the relation $x_1+\cdots+x_5=0$, the elements
$h_I$ form a four-dimensional $\F$-vector subspace $H\subset\Ch^1(BT)$.  Indeed,
the even-parity subspace of $\F^5$ has dimension four, and its image in
$\langle x_1,\ldots,x_5\rangle/(x_1+\cdots+x_5)$ is injective, since
$(1,1,1,1,1)$ has odd parity.  Thus the weights of $S^+$ are precisely $z+h$,
with $h\in H$, and hence
\[
  1+s_1+\cdots+s_{16}=\prod_{h\in H}(1+z+h).
\]
Let $\Psi_H(y)=\prod_{h\in H}(y+h)$.  Since $H$ is an $\F$-vector space,
$\Psi_H(y)$ is additive; hence
\[
        \Psi_H(y)=y^{16}+q_8y^8+q_{12}y^4+q_{14}y^2+q_{15}y,
\]
where $q_j$ is homogeneous of degree $j$ in the variables $x_i$.  To see this,
induct on $\dim H$.  If $H=H'\oplus \F v$, then
\[
        \Psi_H(y)=\Psi_{H'}(y)\Psi_{H'}(y+v)
              =\Psi_{H'}(y)^2+\Psi_{H'}(v)\Psi_{H'}(y),
\]
so only powers of $y$ which are powers of two occur.

Now $1+s_1+\cdots+s_{16}=\Psi_H(1+z)$.  Since
$(1+z)^{2^m}=1+z^{2^m}$ in characteristic two, we get
\[
        \Psi_H(1+z)=1+z^{16}+q_8(1+z^8)+q_{12}(1+z^4)+q_{14}(1+z^2)+q_{15}(1+z).
\]
Every positive-degree term appearing here has total degree at least eight.
Hence $s_i=0$ for $1\le i<8$.

By the splitting principle and \eqref{eq:steenrod-rules}, a rank $N$ vector
bundle $V$ satisfies
\[
        \St(c_N(V))=c_N(V)c(V).
\]
After restriction to $T$, this gives $\St^i(t)=t s_i=0$ for $1\le i<8$.
Using \eqref{eq:steenrod-rules} once more, the same vanishing holds for powers:
$\St^i(t^r)=0$ for $1\le i<8$ and $r\ge0$.
\end{proof}

\begin{proof}[Proof of Theorem~\ref{thm:intro-full-image}]
We first prove the inclusion $M[t]\subset \im\varphi_G$.  The classes
$c_2^2,c_3^2$, and $c_4^2$ lie in the image: they are the reductions modulo two
of the Pontryagin classes, equivalently the restrictions of the even Chern
classes of the standard orthogonal representation.  The class
$c_5=x_1\cdots x_5$ belongs to the image for $\Spin(10)$ by
\cite[Lemma~A.1]{KarpenkoSpinModTorsion}.  The class $c_2c_3c_5$ belongs to the
image by Corollary~\ref{cor:rank-five-class}.  Since the image is stable under
Steenrod operations, $M\subset \im\varphi_G$.  The class $t$ is the torus
restriction of the top Chern class of a half-spin representation, hence also
belongs to the image.  Therefore $M[t]\subset \im\varphi_G$.

It remains to prove the reverse inclusion.  By \eqref{eq:ambient-bound},
$\im\varphi_G\subset U[t]$.  We prove that every element of $U[t]$ whose
$\St^1$ and $\St^3$ again lie in $U[t]$ already belongs to $M[t]$.  This is
enough because the image is Steenrod-stable.

By Lemma~\ref{lem:U-normal-form}, every element $x\in U$ has a unique residue
modulo $M$ of the form
\[
        x\equiv A c_2+B c_4+C c_2c_4\pmod {M},
        \qquad A,B,C\in R.
\]

Now suppose $x\in U$ and $\St^1(x)\in U$.  Since $R$ is generated by squares,
$\St^1$ vanishes on $R$.  Using $\St^1(c_2)=c_3$ and
$\St^1(c_4)=c_5$, we get
\[
        \St^1(x)
        \equiv
        A c_3+B c_5+C(c_3c_4+c_2c_5)
        \pmod {M}.
\]
The terms $Bc_5$ and $Cc_2c_5$ lie in $U$.  The possible terms outside $U$ are
$Ac_3+Cc_3c_4$.  No monomial in $Ac_3$ can cancel a monomial in $Cc_3c_4$,
because the former has even $c_4$-exponent and the latter has odd
$c_4$-exponent.  Since neither type contains a factor $c_5$, both types are
outside $U$.  Thus $\St^1(x)\in U$ forces $A=C=0$.  So every such $x$ is
congruent modulo $M$ to $Bc_4$ with $B\in R$.

Assume in addition that $\St^3(x)\in U$.  Odd Steenrod operations vanish on
$R$, and \eqref{eq:steenrod-rules} gives
\[
        \St^3(Bc_4)=B\St^3(c_4)+\St^2(B)\St^1(c_4)
        =B(c_3c_4+c_2c_5)+\St^2(B)c_5.
\]
The terms $Bc_2c_5$ and $\St^2(B)c_5$ lie in $U$.  The term $Bc_3c_4$ lies
outside $U$ unless $B=0$, again because it has odd $c_3$-exponent and no
factor $c_5$.  Therefore $B=0$, and hence $x\in M$.

We now pass from $U$ to $U[t]$.  Let $y=\sum_{r=0}^N t^r y_r$, with
$y_r\in U$, be an element of the image.  Since the image is Steenrod-stable,
$\St^1(y),\St^3(y)\in U[t]$.  Lemma~\ref{lem:halfspin-low-chern} gives
$\St^i(t^r)=0$ for $1\le i\le3$ and $r\ge0$.
Hence all mixed terms in the Cartan formulas vanish, and
\[
        \St^1(y)=\sum_{r=0}^N t^r\St^1(y_r),
        \qquad
        \St^3(y)=\sum_{r=0}^N t^r\St^3(y_r).
\]
Since $U[t]$ is a polynomial ring in $t$ over $U$, it follows that
$\St^1(y_r),\St^3(y_r)\in U$ for every $r$.  The preceding argument gives
$y_r\in M$ for every $r$.  Hence $y\in M[t]$.  Therefore
$\im\varphi_G\subset M[t]$.  Together with the first inclusion, this proves
the mod-two image formula.

It remains to identify the integral image.  The torsion index of $\Spin(10)$ is
two, and the cokernel of restriction is killed by the torsion index
\cite[Theorems~0.1 and~1.3(1)]{TotaroSpin}.  Hence
$2\CH(BT)^W\subset\im\Phi_G$.
Let $x\in\CH(BT)^W$ satisfy $\rho(x)\in M[t]$.  By the mod-two image formula,
there is a class $\bar a\in\Ch(B\!\Spin(10))$ whose restriction is $\rho(x)$.
Choose an integral lift $\widetilde a\in\CH(B\!\Spin(10))$ of $\bar a$ and set
$d=x-\Phi_G(\widetilde a)$.  Then
$d\in 2\CH(BT)\cap\CH(BT)^W$.
Since $\CH(BT)$ is torsion-free,
\[
  2\CH(BT)\cap\CH(BT)^W=2\CH(BT)^W.
\]
Indeed, if $d=2b$ and $wd=d$ for every $w\in W$, then
$2(wb-b)=0$, so $wb=b$.  Thus $d\in2\CH(BT)^W\subset\im\Phi_G$, and therefore
$x\in\im\Phi_G$.  This proves
$\rho^{-1}(M[t])\subset\im\Phi_G$.

The reverse inclusion follows by reducing modulo two and applying the mod-two
image formula.  Since the kernel of
$\CH(B\!\Spin(10))\to\CH(BT)$ is the torsion ideal, the final identification of
$\CH(B\!\Spin(10))/\mathrm{tors}$ follows.
\end{proof}

\section*{Statements and Declarations}

\subsection*{Funding}
This work was supported by the National Research Foundation of Korea (NRF)
grant funded by the Korean government (MSIT) (No.~RS-2026-25471364).

\subsection*{Data availability}
Data sharing is not applicable to this article as no datasets were generated
or analysed during the current study.

\subsection*{Competing interests}
The author has no relevant financial or non-financial interests to disclose.

\end{document}